\documentclass{article}

\usepackage{amssymb}

\usepackage[english]{babel}
\usepackage{textcomp}
\usepackage{amsmath}
\usepackage{amsthm}
\usepackage{graphicx}

\newtheorem{defin}{Definition}[section]
\newtheorem{lemma}[defin]{Lemma}
\newtheorem{theor}[defin]{Theorem}

\newtheorem{prop}[defin]{Proposition}
\theoremstyle{remark}
\newtheorem{rem}{Remark}[section]

\newcommand{\E}{\mathbb{E}}   
\newcommand{\Pro}{\mathbb{P}} 

\newcommand{\N}{\mathbb{N}}   
\newcommand{\R}{\mathbb{R}}   
\newcommand{\Z}{\mathbb{Z}}   

\begin{document}

\begin{center}
\Huge
{Fractional L\'evy processes as a result of  compact interval integral transformation}
\end{center}

\thispagestyle{empty}
\normalsize

{Heikki Tikanm\"{a}ki, Aalto University, School of Science and Technology,\\Institute of Mathematics, P.O.Box 11100, FI-00076 Aalto, Finland.\\E-mail: heikki.tikanmaki@tkk.fi}

{Yuliya Mishura, Taras Shevchenko National University of Kyiv,  Ukraine. \\E-mail: myus@univ.kiev.ua}
\begin{abstract}
Fractional Brownian motion can be represented as an integral of a deterministic kernel w.r.t. an ordinary Brownian motion either on infinite or compact interval. In previous literature fractional L\'evy processes are defined by integrating the infinite interval kernel w.r.t. a general L\'evy process. In this article we define fractional L\'evy processes using the compact interval representation.

We prove that the fractional L\'evy processes presented via different integral transformations have  the same finite dimensional distributions if and only if they are fractional Brownian motions. Also, we present relations between  different fractional L\'evy processes and analyze the properties of such processes. A financial example is  introduced as well.

Keywords: Fractional L\'evy process, integral representation of fBm, Mandelbrot-van-Ness transformation, Molchan-Golosov transformation

Subject classification (MSC2010): 60G22, 60G51.
\end{abstract}

\newpage
\section{Introduction}
\label{intro}
Fractional Brownian motion (fBm) has become very important tool in modern probability and statistical modeling. Fractional Brownian motion is defined as a Gaussian process with certain covariance structure. Besides of this definition, fBm could be represented equivalently as an integral of a deterministic kernel with respect to an ordinary Brownian motion. In fact, there exist at least two such kernels: Mandelbrot-Van Ness kernel that has infinite support and compactly supported Molchan-Golosov kernel.

Fractional Brownian motion allows to model dependency because of its covariance structure. Hence it is a popular model in many applications. There is one parameter, namely Hurst parameter $H$ that describes the whole dependence structure. For Hurst parameter $H>\frac{1}{2}$ the process has long-range dependence property. For $H<\frac{1}{2}$ the increments are negatively correlated and for $H=\frac{1}{2}$ the increments are independent i.e. we come to the ordinary Brownian motion case. Of course,  fBm has also several other properties such as self-similarity and stationarity of increments. Despite of all these properties, fBm is neither semimartingale nor Markov process (excluding the Brownian motion case $H=\frac{1}{2}$).

If one is interested in fBm because of its correlation structure, one might not need exactly fBm but just some process with the same covariance structure. The law of a Gaussian process is determined uniquely by its second order structure. However, if we drop the assumption of Gaussianity, then the covariance structure does not determine the law uniquely. Thus, there are several possible ways to generalize fractional Brownian motion to the case of fractional L\'evy processes. By choosing different ways of generalization, we preserve different properties of fBm.

In this paper we define fractional L\'evy processes by two different integral transformations. This means basically that we take the integral representation of fractional Brownian motion with respect to an ordinary Brownian motion and replace the driving Brownian motion with a general square integrable L\'evy process. The processes that we will end up with, share the covariance structure of fractional Brownian motion. However, these processes could be more flexible in modeling than fractional Brownian motion, since the driving L\'evy noise is more general than the Gaussian one. For example, we might be able to capture such a phenomenon as a shock in the stock market (jump of the driving L\'evy process) that affects the market with delay and has some long term impacts. The applications in different fields of science might also be possible.

Fractional L\'evy processes by Mandelbrot-Van Ness representation were first defined by~\cite{benassi}. The theory was developed further by~\cite{marquardt}. Molchan-Golosov transformation has been used in fractional L\'evy process setting for defining fractional subordinators in~\cite{bender}. The general definition for fractional L\'evy processes by Molchan-Golosov transformation is new to the best of our knowledge.

There are also several other related concepts in the literature. One of the best known are fractional stable motions, see~\cite{samorodnitsky}. However, fractional stable motions are not fractional L\'evy processes in the sense that they would share the covariance structure of fBm.

\section{Preliminaries}
Let $(\Omega,\mathcal{F},(\mathcal{F}_t)_{t\in \R},\Pro)$ denote a fixed filtered probability space.
\subsection{Definition and some properties of fBm}
\begin{defin}[Fractional Brownian motion]
Let $H\in(0,1)$. Fractional Brownian motion with Hurst parameter $H$ is a zero mean Gaussian process $B^H$ with covariance
\begin{equation*}
\E B^H_t B^H_s=\frac{1}{2}\left ( |t|^{2H}+|s|^{2H}-|t-s|^{2H}\right ).
\end{equation*}
\end{defin}
Besides of this definition, we can represent fractional Brownian motion in many equivalent ways. For example, we can choose integral representations with respect to an ordinary Brownian motion.

On one hand, let $(W_t)_{t\in \R}$ be two-sided Brownian motion, i.e. $W_t=W^{(1)}_t$, when $t\geq 0$ and $W_t=-W^{(2)}_{-t}$, when $t<0$. Here $W^{(1)},W^{(2)}$ are independent Brownian motions. Then it holds that
\begin{equation*}
\left(B^H_t\right )_{t\in \R}
{=}\left (\int_{-\infty}^t f_H(t,s)dW_s\right)_{t\in \R},
\end{equation*}
where the Mandelbrot-Van Ness kernel $f_H$ is given by
\begin{equation*}
f_H(t,s)=C_H\left ( (t-s)_+^{H-\frac{1}{2}}-(-s)_+^{H-\frac{1}{2}}\right), \quad s,t\in \R,
\end{equation*}
and the constant is given by
\begin{equation*}
C_H=\left ( \int_0^\infty \left ( (1+s)^{H-\frac{1}{2}}-s^{H-\frac{1}{2}}\right )^2ds+\frac{1}{2H}\right )^{-\frac{1}{2}}=\frac{\big(2H \sin \pi H
\Gamma(2H)\big)^{1/2}}{\Gamma(H+1/2)}.
\end{equation*}
On the other hand, fBm can be represented as well on compact interval by
\begin{equation*}
\left (B^H_t\right)_{t\geq 0}=\left (\int_0^tz_H(t,s)dW_s\right)_{t\geq 0},
\end{equation*}
where the Molchan-Golosov kernel $z_H$ is given by
\begin{equation*}
z_H(t,s)=c_H (t-s)^{H-\frac{1}{2}}F\left ( \frac{1}{2}-H,H-\frac{1}{2},H+\frac{1}{2},\frac{s-t}{s}\right), \quad 0< s< t<\infty,
\end{equation*}
and $z_H(t,s)=0$ otherwise. Here the Gauss' hypergeometric function $F$ of $x\in \R$ with parameters $a,b,c$ is defined by
\begin{equation*}
F(a,b,c,x)=\sum_{j=0}^\infty \frac{(a)_j(b)_j}{(c)_j}\frac{x^j}{j!},
\end{equation*}
where $(a)_0=0$ and $(a)_k=a\cdot(a+1)\dots (a+k-1)$ for $k\in \N$. The constant $c_H$ is given by
\begin{equation*}
c_H=\frac{1}{\Gamma(H+\frac{1}{2})}\left ( \frac{2H\Gamma(H+\frac{1}{2})\Gamma(\frac{3}{2}-H)}{\Gamma(2-2H)} \right)^{\frac{1}{2}}.
\end{equation*}
For $H>\frac{1}{2}$ we have the following simplified form of the kernel
\begin{equation*}
z_H(t,s)=\Big(H-\frac{1}{2}\Big)c_Hs^{\frac{1}{2}-H}\int_s^t u^{H-\frac{1}{2}}(u-s)^{H-\frac{3}{2}}du, \quad 0<s<t<\infty.
\end{equation*}
Note that we do not need the definition of any two-sided process for the compact interval Molchan-Golosov representation of fBm. For more details on the integral representations of fBm, see for example~\cite{jost} or~\cite{nualart}.
\subsection{L\'evy processes}
Consider now  the conventions related to L\'evy processes. By the well-known L\'evy-Khinchine theorem, the characteristic function of a L\'evy process $L$ at time $t\geq 0$ can be represented as
\begin{equation*}
\E e^{iuL_t}=\exp \left (t \Psi(u)\right ),
\end{equation*}
where the characteristic exponent $\Psi$ is given by
\begin{equation*}
\Psi(u)=i\gamma u-\frac{1}{2}\sigma^2u^2+\int_\R \left ( e^{iux}-1-iux1_{\{|x|<1\}}\right )\nu(dx),
\end{equation*}
with $\gamma \in \R$, $\sigma^2\geq 0$ and $\nu$ being a measure concentrated on $\R \backslash \{0\}$ and satisfying
\begin{equation*}
\label{levymeasurecondition}
\int_\R (1\wedge x^2)\nu(dx)<\infty;
\end{equation*}
see for instance~\cite{kyprianou}~or~\cite{sato}. We call $(\gamma,\sigma^2,\nu)$ the characteristic triplet of $L$. From now on we assume that $\E L_1^2<\infty$ and $\E L_1=0$. For simplicity we assume that there is no Gaussian component, i.e. $\sigma^2=0$. With these assumptions we see that the characteristic function can be written as
\begin{equation*}
\E e^{iuL_t}=\exp \left (t \int_{\R}\left (e^{iux}-1-iux\right )\nu(dx)\right ),
\end{equation*}
as in~\cite{bender2}.
\subsection{FLp by infinite interval transformation}
Fractional L\'evy processes by infinite interval transformation were defined for $H \in (\frac{1}{2},1)$ in~\cite{marquardt}. However,  the $L^2$- definition in~\cite{marquardt} can be extended for $H\in (0,\frac{1}{2})$ as well.

\begin{defin}[Two-sided L\'evy processes]
A two-sided L\'evy process or a L\'evy process on $\R$, $(L_t)_{t\in \R}$, is defined as $L_t=L^{(1)}_t$ if $t\geq 0$ and $L_t=-L^{(2)}_{-(t-)}$ if $t<0$, where $L^{(1)},L^{(2)}$ are independent and identically distributed L\'evy processes. We say that the characteristic triplet and exponent of $L^{(1)}$ are the characteristic triplet and exponent of $L$, respectively.
\end{defin}
\begin{defin}[FLp by Mandelbrot-Van Ness transformation]
\label{flpmvn}
Let $(L_t)_{t \in \R}$ be a L\'evy process defined on $\R$. Furthermore, assume that $\E L_1=0$ and $\E L_1^2<\infty$ and $L$ does not have Gaussian  component. For $H\in\left(0,1\right)$ we say that
\begin{equation*}
X_t=\int_{-\infty}^t f_H(t,s)dL_s
\end{equation*}
is the fractional L\'evy process by Mandelbrot-Van Ness transformation (fLpMvN), where the stochastic integral is understood as a limit in probability of elementary integrals or in $L^2$-sense.
\end{defin}
Also the following facts follow from~\cite{marquardt}:
\begin{itemize}
\item The integral can be understood pathwise if $H>\frac{1}{2}$ as an improper Riemann integral.
\item The paths of fLpMvN are continuous when $H>\frac{1}{2}$ and even H\"older continuous of order $H-\frac{1}{2}$ on compacts.
\item Fractional L\'evy processes are never self-similar. This is proved for the case $H>\frac{1}{2}$, but the same proof works for the whole range $H\in(0,1)$.
\end{itemize}
In this paper, we contribute to the theory of fLpMvN by proving Theorem~\ref{flpmvnqv} on quadratic variation of fLpMvN.
\section{FLp as a result of compact interval transformation}
The main contribution of this paper is the theory of fractional L\'evy processes obtained via compactly supported Molchan-Golosov transformation. Convenient  feature of  these processes is that we do not need their infinite history.
\begin{defin}[FLp by Molchan-Golosov transformation]
\label{flpmgdef}
Let $(L_t)_{t\geq 0}$ be a L\'evy process without Gaussian component such that $\E L_1=0$ and $\E L_1^2<\infty$. Let $H \in(0,1)$. We call the stochastic process
\begin{equation*}
Y_t=\int_0^t z_H(t,s)dL_s
\end{equation*}
fractional L\'evy process by Molchan-Golosov transformation (fLpMG to be short). Here $z_H$ is the Molchan-Golosov kernel.
\end{defin}
The definition is understood as taking the limit in probability of elementary integrals in the sense of~\cite{rajput}~or~\cite{marquardt}.
\begin{rem}
It is also possible to include Gaussian component by considering a sum of fLpMG driven by pure jump L\'evy process and an independent fBm with the same Hurst parameter.
\end{rem}
The following theorem is the main result of the paper. It basically states that Brownian motion is the only process with slight moment assumptions such that the both integral transformations give the same process (in distribution).
\begin{theor}\label{mainthm}
Let $H\in (\frac{1}{2},1)$ and $L$ be a (two-sided) L\'evy process with non-degenerate L\'evy measure s.t. $\E L_1=0$.

 1) If $\E |L_1|^3<\infty$ and $\E L_1^3\neq0$, then fLpMvN $X$ and fLpMG $Y$ driven by $L$ have different finite dimensional distributions.
 
 2) If $\E L_1^4<\infty$, then fLpMvN $X$ and fLpMG $Y$ driven by $L$ have different finite dimensional distributions.
\end{theor}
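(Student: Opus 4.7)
The strategy is to distinguish the two laws at a single time $t>0$ by comparing higher cumulants of the one-dimensional marginals. For a deterministic kernel $h$ and a pure-jump L\'evy process $L$ with $\E L_1=0$ and L\'evy measure $\nu$, the L\'evy--Khinchine formula yields
\begin{equation*}
\log \E \exp\!\left(iu\!\int h\,dL\right) \;=\; \int \Psi(uh(s))\,ds,\qquad \Psi(v)=\int_{\R}\!(e^{ivx}-1-ivx)\,\nu(dx),
\end{equation*}
from which, by expanding $\Psi$ around the origin up to the relevant order, one obtains the cumulant identity
\begin{equation*}
\kappa_n\!\left(\int h\,dL\right) = \kappa_n(L_1)\int h(s)^n\,ds, \qquad \kappa_n(L_1)=\int_{\R}x^n\,\nu(dx),
\end{equation*}
valid for every $n\le p$ whenever $\E|L_1|^p<\infty$. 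The case $n=2$ yields the common fBm covariance shared by $X_t$ and $Y_t$; the distinction will come from $n=3$ in Part~1 and $n=4$ in Part~2.

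\textbf{Reduction.} If $X_t$ and $Y_t$ had the same law, matching $n$-th cumulants would force
\begin{equation*}
\kappa_n(L_1)\left(\int_{-\infty}^t f_H(t,s)^n\,ds \;-\; \int_0^t z_H(t,s)^n\,ds\right)=0.
\end{equation*}
Under the hypothesis of Part~1 we have $\kappa_3(L_1)=\E L_1^3\neq 0$. Under Part~2, the non-degeneracy of $\nu$ together with $\E L_1^4<\infty$ gives $\kappa_4(L_1)=\int_\R x^4\,\nu(dx)>0$. In each case the theorem is reduced to the analytic inequality
\begin{equation*}
\int_{-\infty}^t f_H(t,s)^n\,ds \;\neq\; \int_0^t z_H(t,s)^n\,ds, \qquad n=3 \text{ (Part 1) or } n=4 \text{ (Part 2)}.
\end{equation*}

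\textbf{The main obstacle.} A local analysis using the integral representation of $z_H$ valid for $H>1/2$ shows that $z_H(t,s)\sim\tfrac12 c_H\,t^{2H-1}\,s^{1/2-H}$ as $s\to 0^+$, so $\int_0^t z_H(t,s)^n\,ds=\infty$ exactly when $H\ge\tfrac12+\tfrac1n$; by contrast $f_H(t,\cdot)$ is bounded on compacts and decays like $|s|^{H-3/2}$ at $-\infty$, hence $\int_{-\infty}^t f_H(t,s)^n\,ds$ is finite for every $H\in(1/2,1)$ and $n\ge2$. Consequently, for $H\in[\tfrac56,1)$ with $n=3$ and for $H\in[\tfrac34,1)$ with $n=4$ the inequality is immediate: $\E|X_t|^n$ is finite while $\E|Y_t|^n$ is infinite, so $X_t\not\stackrel{d}{=}Y_t$. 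The genuine work is the complementary range, where both integrals are finite and must be compared. There I would exploit the joint self-similarity $f_H(ct,cs)=c^{H-1/2}f_H(t,s)$ and the analogous identity for $z_H$ to reduce to $t=1$, and then compute: on the MvN side, splitting at $s=0$ yields the closed-form piece $C_H^n/(n(H-1/2)+1)$ plus an improper piece $C_H^n\!\int_0^\infty\!\bigl((1+w)^{H-1/2}-w^{H-1/2}\bigr)^n\,dw$; on the MG side one expands via the integral representation of $z_H$ and evaluates by two applications of Fubini together with Euler's Beta integral. Distinguishing these two numbers is the technical heart of the argument, and a clean route—given that both quantities coincide with $t$ at $H=\tfrac12$, where both kernels degenerate to $\mathbf{1}_{[0,t]}$—is to expand around $H=\tfrac12$ and show that the first non-vanishing correction differs between the two sides.
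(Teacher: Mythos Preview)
Your overall strategy matches the paper exactly: compare the $n$-th cumulants of $X_t$ and $Y_t$ via the kernel integrals $\int f_H^n$ and $\int z_H^n$, note that $\kappa_n(L_1)\neq 0$ under each hypothesis, and reduce to showing the two kernel integrals differ. Your treatment of the range $H\ge\tfrac12+\tfrac1n$ (divergence of the MG integral versus finiteness of the MvN integral) is also precisely what the paper does.

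The gap is in your proposed handling of the complementary range $H\in(\tfrac12,\tfrac12+\tfrac1n)$. Expanding both integrals around $H=\tfrac12$ and showing that the first non-vanishing correction differs would establish the inequality only for $H$ in some neighborhood of $\tfrac12$; it says nothing about possible coincidences further into the interval. Even combined with real-analyticity in $H$, this would at best give inequality except on a discrete set of $H$-values, whereas the theorem is stated for every $H\in(\tfrac12,1)$. And since after your self-similarity reduction the comparison is independent of $t$, you cannot vary $t$ to break such accidental coincidences either.

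The paper closes this gap by proving the \emph{global} strict inequality $\int_{-\infty}^1 f_H(1,s)^n\,ds<\int_0^1 z_H(1,s)^n\,ds$ for all $H$ in the remaining range. It does this not by perturbation around $H=\tfrac12$ but by an explicit sandwich: an upper bound for the MvN side via the mean-value estimate $(1+z)^{H-1/2}-1\le(H-\tfrac12)z$, and a lower bound for the MG side via integration by parts in the integral representation of $z_H$ together with $(u-s)^{H-1/2}\le u^{H-1/2}(1-s)^{H-1/2}$. After these estimates the comparison collapses to an elementary inequality between explicit rational functions of $p=H-\tfrac12$, which is then verified on the whole interval $(0,\tfrac1n)$ by hand. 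Your Beta-integral evaluation of the MG side could feed into such a scheme, but you still need a mechanism that works uniformly in $H$, not merely asymptotically near $\tfrac12$.
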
 
The proof of this theorem is presented in section~\ref{mainproof}. Here it does not matter if the driving L\'evy process has Gaussian component or not because the proof is based on the 4th cumulant. In fact one does not need these moment assumptions, if the driving L\'evy process happens to be a compound Poisson process.
\begin{prop}
\label{mainimproved}
Let $L$ be a non-degenerate compound Poisson process s.t. $\E L_1=0$ and $\E L_1^2<\infty$. Then fLpMvN $X$ and fLpMG $Y$ have different finite dimensional distributions.
\end{prop}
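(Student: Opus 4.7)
The plan is to establish the stronger statement that the one-dimensional marginals $X_t$ and $Y_t$ already differ in law for every fixed $t>0$. For a compound Poisson $L$ with L\'evy measure $\nu=\lambda F$ ($\lambda=\nu(\R)<\infty$ and $F$ centered since $\int x\,\nu(dx)=0$), the L\'evy--Khintchine exponent of any stochastic integral $\int g\,dL$ collapses to $\lambda\int(\phi_\xi(ug(s))-1)\,ds$, where $\phi_\xi$ is the characteristic function of $F$. Assuming toward a contradiction that $X_t\stackrel{d}{=}Y_t$, taking logarithms of the two characteristic functions (well defined since both laws are infinitely divisible, hence non-vanishing), dividing by $\lambda>0$, and splitting the integral for $X_t$ at $s=0$, one is led to the identity
\[
\int_0^t \bigl(\phi_\xi(u z_H(t,s)) - \phi_\xi(u f_H(t,s))\bigr)\,ds \;=\; \int_{-\infty}^0 \bigl(\phi_\xi(u f_H(t,s)) - 1\bigr)\,ds, \qquad u\in\R.
\]

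The contradiction is produced by comparing the behavior of the two sides as $u\to\infty$. Since $|\phi_\xi|\leq 1$, the left-hand side is bounded by $2t$ uniformly in $u$. The real part of the right-hand side equals $-\int_{-\infty}^0 (1-\operatorname{Re}\phi_\xi(u f_H(t,s)))\,ds$, whose integrand is nonnegative; I would show that it diverges to $-\infty$. Non-degeneracy of $F$ (which follows from $\nu\neq 0$ and $\nu(\{0\})=0$) guarantees that the continuous function $v\mapsto 1-\operatorname{Re}\phi_\xi(v)$ is not identically zero, so by continuity one can choose an interval $[\delta_1,\delta_2]\subset(0,\infty)$ on which $1-\operatorname{Re}\phi_\xi$ is bounded below by some $\varepsilon>0$. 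Combined with the classical asymptotic $f_H(t,s)\sim (H-\tfrac{1}{2})\,t\,(-s)^{H-3/2}$ as $s\to-\infty$, the Lebesgue measure of the set $\{s<0 : u|f_H(t,s)|\in[\delta_1,\delta_2]\}$ is of order $u^{2/(3-2H)}\to\infty$, so the real part of the right-hand side blows up in absolute value, contradicting the uniform bound on the left.

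The principal technical obstacle is guaranteeing the existence of the window $[\delta_1,\delta_2]$ on which $1-\operatorname{Re}\phi_\xi$ has a positive lower bound. This is subtle in the lattice case, for instance $F=\delta_a$ with $a\neq 0$, where $1-\cos(av)$ vanishes on the discrete set $\{2\pi k/a : k\in\Z\}$; the window must be chosen to avoid these zeros, which is possible because such zeros form a discrete set while $1-\operatorname{Re}\phi_\xi$ is continuous and not identically zero. Once the window is fixed, the measure estimate for the level set of $|f_H(t,\cdot)|$ is elementary, obtained by inverting the power-law tail of $f_H$ at $-\infty$. The attractive feature of this approach is that it relies only on $\nu$ being a finite measure with $\int x^2\,\nu(dx)<\infty$, which is precisely why no higher moment hypothesis on $L_1$ is required.
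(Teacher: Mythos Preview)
Your argument is correct and takes a genuinely different route from the paper. The paper exploits the atom at zero: since $Y_t=0$ whenever $L$ has no jumps on $[0,t]$, one has $\Pro(Y_t=0)\ge e^{-\lambda t}$, whereas by decomposing $X_t$ into the independent pieces over $(-\infty,-1)$ and $[-1,t)$ and counting jumps on $[-1,t)$ one shows $\Pro(X_t=0)<e^{-\lambda t}$ for all sufficiently small $t>0$. This is shorter and purely probabilistic, but it only distinguishes the one-dimensional laws for \emph{small} $t$. Your characteristic-function approach instead contrasts the finite domain of the Molchan--Golosov integral with the infinite domain of the Mandelbrot--Van~Ness integral directly: the bounded left-hand side versus the unbounded real part on the right yields $X_t\stackrel{d}{\neq}Y_t$ for \emph{every} $t>0$, which is strictly stronger. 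The level-set estimate you outline is sound; for $H>\tfrac12$ the map $s\mapsto f_H(t,s)$ is strictly increasing on $(-\infty,0)$ from $0$ to $C_H t^{H-1/2}$, so inverting the power-law tail is elementary, and the window $[\delta_1,\delta_2]$ exists because $\operatorname{Re}\phi_\xi$ is even, continuous, equal to $1$ at the origin, and not identically~$1$.
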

Proof of the proposition is presented in section~\ref{mainproof}. A picture of the paths of the two fractional L\'evy processes is in Figure~\ref{flppathpic}. The driving L\'evy process is a compound Poisson process with jump sizes $\pm 1$. Note the different behavior near origin.
\begin{figure}
\begin{center}
\includegraphics[width=6cm]{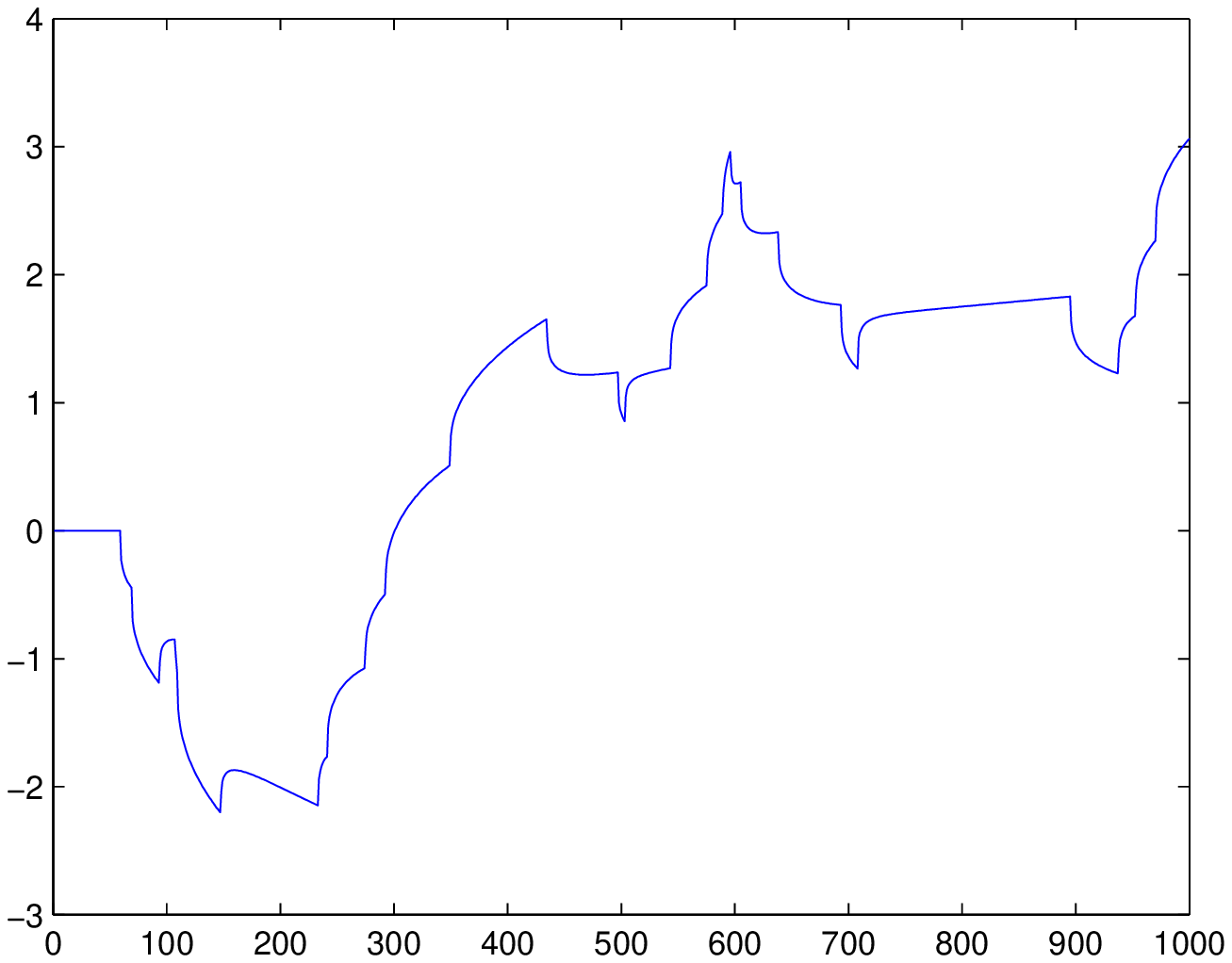}
\includegraphics[width=6cm]{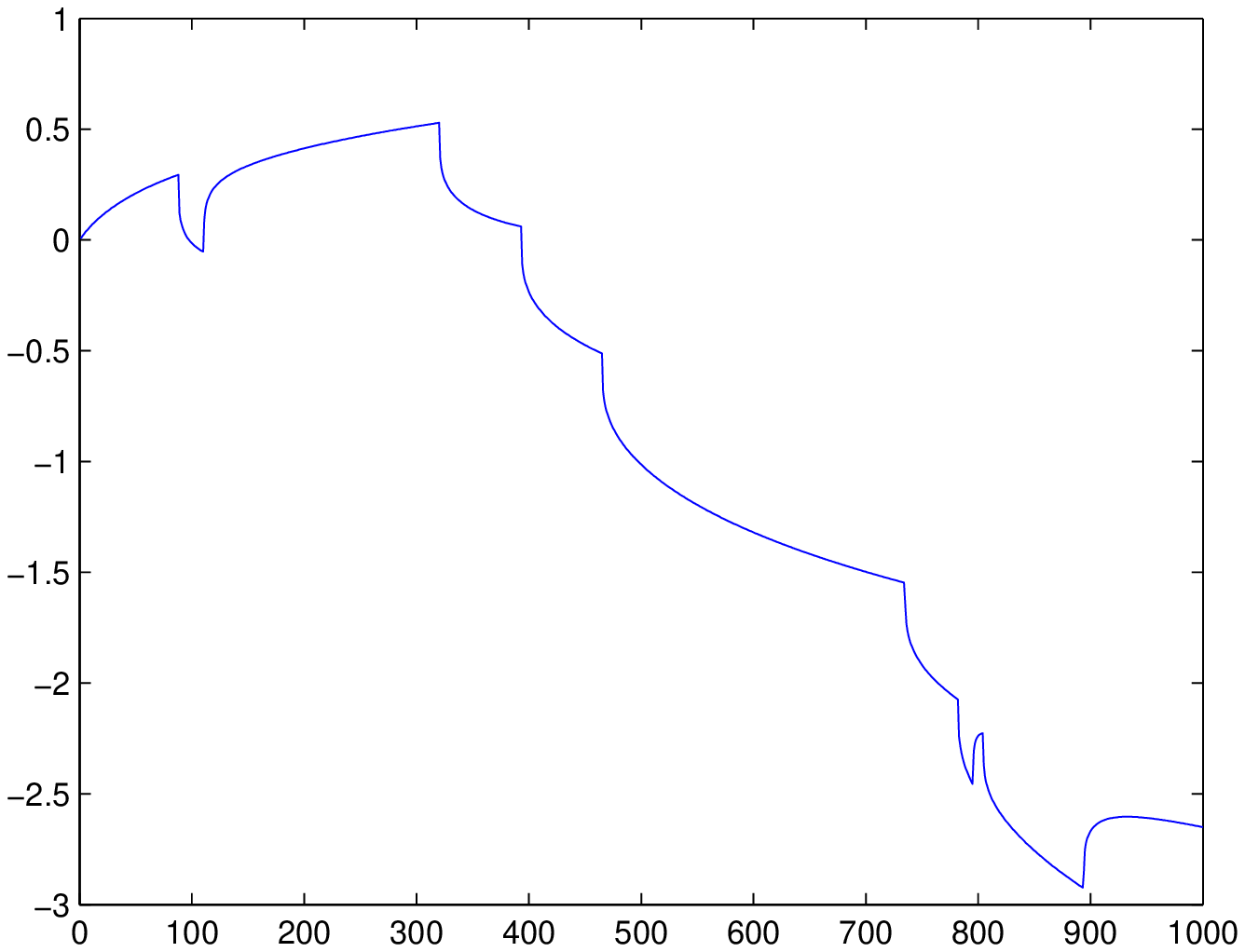}
\caption{A path of fLpMG (left) and fLpMvN (right) with L\'evy measure $\lambda(\delta_{-1}+\delta_{1})$ and $H=0.75$. The different behavior of the two processes can be seen near origin, even though the driving paths are not the same.}
\label{flppathpic}
\end{center}
\end{figure}
\begin{figure}
\begin{center}
\includegraphics[width=6cm]{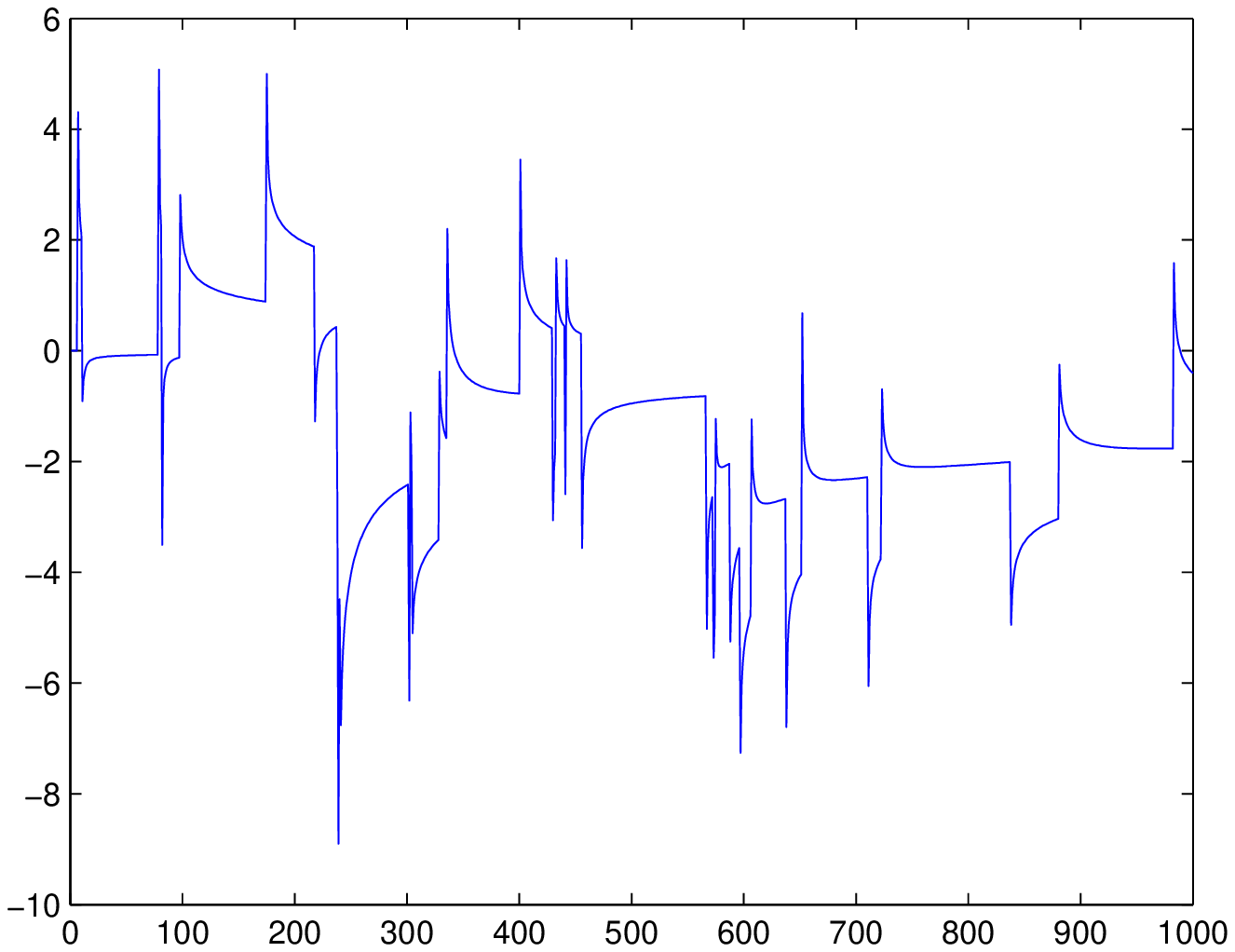}
\includegraphics[width=6cm]{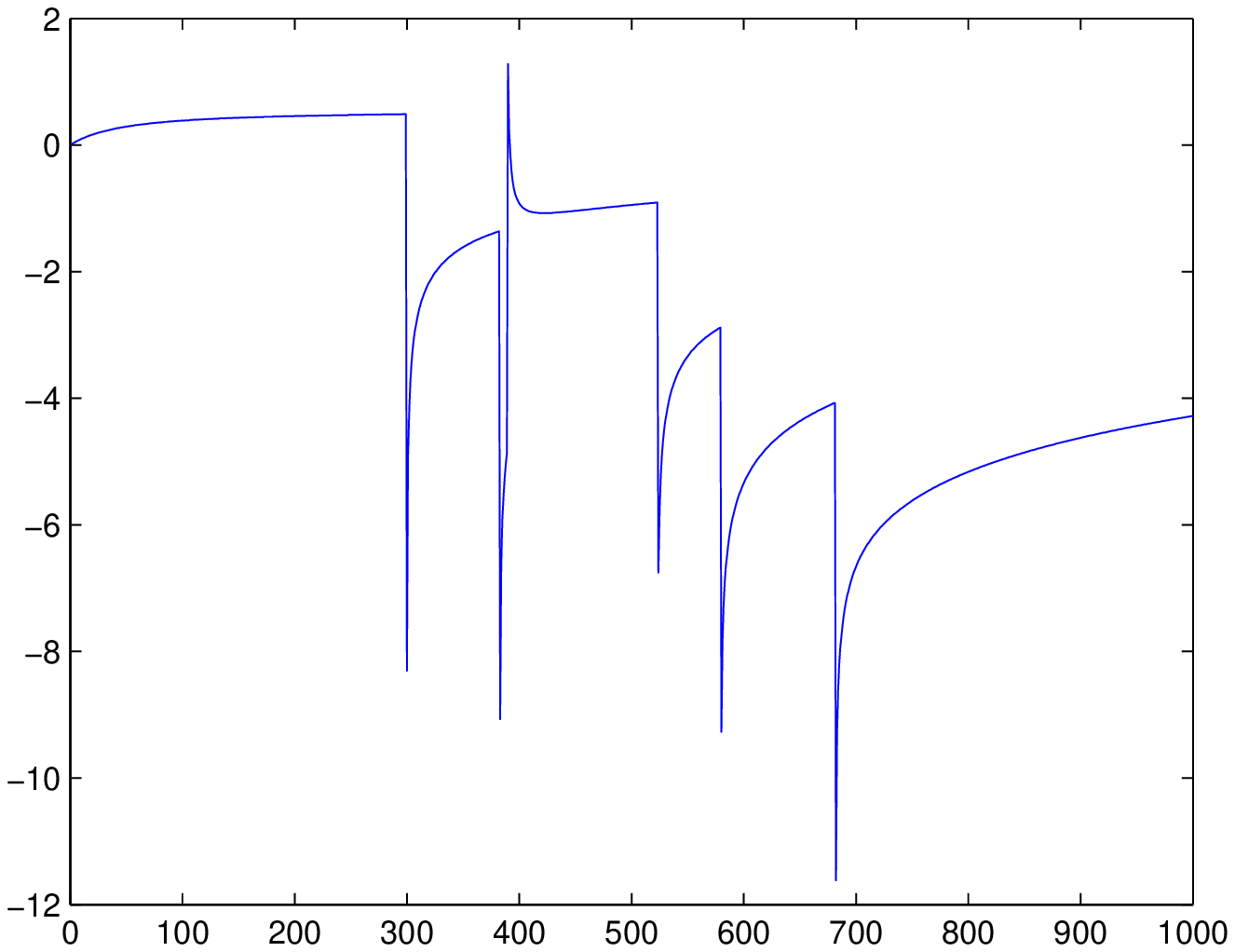}
\caption{A path of fLpMG (left) and fLpMvN (right) with $H=0.25$.}
\label{flppathpic2}
\end{center}
\end{figure}
\begin{rem}
An fLpMG driven by $L$ is adapted to the natural filtration $\mathcal{F}^L_t= \sigma \{L_u|0\leq u\leq t\}$. This is not the case for fLpMvN.
\end{rem}
\begin{prop}
An fLpMG $Y$ can be considered as the $L^2$- limit of approximating step functions. Moreover, the following $L^2$- isometry holds
\begin{equation*}
\E Y_t^2 = ||z_H(t,\cdot)||^2_{L^2([0,t])}\E L_1^2.
\end{equation*}
\end{prop}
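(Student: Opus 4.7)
The plan is to mimic the standard construction of the Wiener integral, but with the driving noise replaced by a centered square-integrable L\'evy process; the key algebraic input is the orthogonality of disjoint increments of $L$, which follows from independence of increments together with $\E L_1=0$ and $\E L_1^2<\infty$.

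First I would record that $z_H(t,\cdot)\in L^2([0,t])$; indeed, from the fBm representation we know
\begin{equation*}
\int_0^t z_H(t,s)^2\,ds = \E (B^H_t)^2 = t^{2H}<\infty,
\end{equation*}
so one may pick a sequence of simple (step) functions $z_n=\sum_{i=1}^{k_n} a_i^{(n)}\mathbf{1}_{(s_i^{(n)},s_{i+1}^{(n)}]}$ with $z_n\to z_H(t,\cdot)$ in $L^2([0,t])$. For such an elementary integrand the integral against $L$ is defined pathwise by
\begin{equation*}
I_n := \int_0^t z_n(s)\,dL_s = \sum_{i=1}^{k_n} a_i^{(n)}\bigl(L_{s_{i+1}^{(n)}}-L_{s_i^{(n)}}\bigr),
\end{equation*}
and by the stated moment/centering assumptions together with independence of the increments of $L$, a direct computation gives the isometry on simple functions:
\begin{equation*}
\E I_n^2 = \sum_{i=1}^{k_n}\bigl(a_i^{(n)}\bigr)^2 (s_{i+1}^{(n)}-s_i^{(n)})\,\E L_1^2 = \|z_n\|^2_{L^2([0,t])}\,\E L_1^2.
\end{equation*}

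Applying the same identity to the difference $z_n-z_m$ (which is again simple) yields $\E(I_n-I_m)^2=\|z_n-z_m\|^2_{L^2([0,t])}\,\E L_1^2$, so $(I_n)$ is Cauchy in $L^2(\Omega)$ because $(z_n)$ is Cauchy in $L^2([0,t])$. Let $\tilde Y_t$ denote its $L^2(\Omega)$-limit. Since $L^2$-convergence implies convergence in probability and the integral in Definition~\ref{flpmgdef} is itself defined as the limit in probability of elementary integrals along \emph{any} approximating sequence, uniqueness of limits in probability forces $\tilde Y_t=Y_t$ almost surely. This establishes the first claim, namely that $Y_t$ is in fact the $L^2(\Omega)$-limit of the approximating elementary integrals.

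Passing to the limit $n\to\infty$ in the displayed isometry, and using that $\|z_n\|_{L^2([0,t])}\to\|z_H(t,\cdot)\|_{L^2([0,t])}$ on the right-hand side and $\E I_n^2\to\E Y_t^2$ on the left-hand side (continuity of the $L^2$-norm), we obtain
\begin{equation*}
\E Y_t^2 = \|z_H(t,\cdot)\|^2_{L^2([0,t])}\,\E L_1^2,
\end{equation*}
as required. There is no real obstacle in this argument; the only point one must be honest about is reconciling the in-probability definition of the integral with the $L^2$-construction, and this is handled by the uniqueness-of-limits step above. The argument is parallel to (and a special case of) the framework of Rajput and Rosi\'nski used in~\cite{marquardt}.
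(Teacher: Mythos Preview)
Your argument is correct and complete: the orthogonality of disjoint L\'evy increments gives the isometry on simple functions, this makes $(I_n)$ Cauchy in $L^2(\Omega)$, and the identification with $Y_t$ via uniqueness of in-probability limits is exactly the point one needs to make. The paper does not actually prove anything here---it simply invokes Proposition~2.1 of~\cite{marquardt}---so what you have written is an explicit unpacking of the cited result rather than a different route; you even note this yourself in the final sentence.
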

\begin{proof}
This is a direct consequence of Proposition~2.1.~of~\cite{marquardt}.
\end{proof}
\begin{prop}[Autocovariance function]
\begin{equation*}
\E Y_tY_s=\frac{\E L_1^2}{2}\left ( t^{2H}+s^{2H}-|t-s|^{2H}\right ),
\end{equation*}
where $s,t\geq 0$.
\end{prop}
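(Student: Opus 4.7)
The plan is to reduce the claim to the known covariance of the Molchan--Golosov kernel against itself, via the $L^2$-isometry stated in the preceding proposition.

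First I would polarize the isometry. Writing
\begin{equation*}
\E Y_t Y_s = \tfrac{1}{2}\bigl(\E(Y_t+Y_s)^2 - \E Y_t^2 - \E Y_s^2\bigr),
\end{equation*}
and noting that $Y_t+Y_s=\int_0^\infty (z_H(t,u)+z_H(s,u))\,dL_u$ (the integrands vanish outside the relevant intervals), the proposition on the $L^2$-isometry gives
\begin{equation*}
\E Y_t Y_s \;=\; \E L_1^2 \int_0^{t\wedge s} z_H(t,u)\, z_H(s,u)\, du.
\end{equation*}
Alternatively, one can invoke the bilinearity of the covariance form on simple integrands and pass to the limit in $L^2$, since by Proposition $Y$ is the $L^2$-limit of approximating step functions.

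Second, I would identify the resulting deterministic integral. Since $B^H_t=\int_0^t z_H(t,u)\,dW_u$ is the Molchan--Golosov representation of fractional Brownian motion recalled in the preliminaries, the usual Wiener isometry yields
\begin{equation*}
\int_0^{t\wedge s} z_H(t,u)\, z_H(s,u)\, du \;=\; \E B^H_t B^H_s \;=\; \tfrac{1}{2}\bigl(t^{2H}+s^{2H}-|t-s|^{2H}\bigr),
\end{equation*}
by the definition of fBm. Substituting back gives the announced formula.

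There is essentially no hard step here: the only point requiring a line of care is the polarization, which is justified because the isometry from the preceding proposition extends by linearity from $z_H(t,\cdot)$ and $z_H(s,\cdot)$ individually to their sum (both lie in $L^2(\R_+)$, so their sum is also an admissible integrand). Once that is in place, the result is purely a transfer of the Gaussian calculation to the L\'evy setting, which is why the covariance agrees with that of fBm up to the multiplicative factor $\E L_1^2$.
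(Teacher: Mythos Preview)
Your argument is correct and follows essentially the same route as the paper: polarize, apply the $L^2$-isometry for integrals against $L$, and reduce to the known identities for the Molchan--Golosov kernel coming from the fBm representation. The only cosmetic difference is that the paper polarizes via $\E(Y_t-Y_s)^2$ (computing $\E Y_t^2$, $\E Y_s^2$, and $\E(Y_t-Y_s)^2$ separately as $\E L_1^2$ times $t^{2H}$, $s^{2H}$, $|t-s|^{2H}$), whereas you polarize via $\E(Y_t+Y_s)^2$ and identify the cross-integral $\int z_H(t,u)z_H(s,u)\,du$ directly with $\E B^H_tB^H_s$; the content is the same.
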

\begin{proof}
By $L^2$- isometry we have that
\begin{equation*}
\E Y_t^2 = ||z_H(t,\cdot)||^2_{L^2([0,t])}\E L_1^2.
\end{equation*}
We use the same argument for the increment (for the $L^2$- isometry, see \cite{marquardt} proposition~2.1.). Thus
\begin{equation*}
\E (Y_t-Y_s)^2=\E L_1^2\int_0^{t \vee s} (z_H(t,u)-z_H(s,u))^2du=\E L_1^2 \cdot |t-s|^{2H}.
\end{equation*}
Now
\begin{equation*}
\E Y_t Y_s=\frac{1}{2}\left ( \E Y_t^2 + \E Y_s^2 -\E (Y_t-Y_s)^2\right )=\frac{\E L_1^2}{2}\left (t^{2H}+s^{2H}-|t-s|^{2H}\right ).
\end{equation*}
\end{proof}
Besides the $L^2$- interpretation, we have also a partial result on the pathwise construction of fLpMG.
\begin{prop}[Pathwise construction]
\label{pathwise}
Let $H \in \left ( \frac{1}{2},1\right )$ and $L$ be a compound Poisson process with characteristic triplet $(0,0,\nu)$ such that $\E L_1=0$ and $\E L_1^2<\infty$. Then
\begin{equation*}
Y_t=\int_0^tz_H(t,s)dL_s=-\int_0^t \left ( \frac{d}{ds}z_H(t,s)\right )L_sds \quad \text{(almost surely)}.
\end{equation*}
\end{prop}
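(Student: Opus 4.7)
The plan is to exploit the fact that a compound Poisson path is piecewise constant with only finitely many jumps on $[0,t]$, so that both sides of the asserted identity collapse, pathwise, to finite sums and the problem reduces to a deterministic integration-by-parts / Abel-summation computation.

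First, I fix $\omega$ and enumerate the jump times $0<\tau_1<\cdots<\tau_N\le t$ of $L$ on $[0,t]$, with jump sizes $J_i:=L_{\tau_i}-L_{\tau_i-}$. Since $L$ is compound Poisson, $N<\infty$ a.s., and with the convention $\tau_0:=0$, $\tau_{N+1}:=t$, the process $L_s$ equals $\sum_{j\le i} J_j$ on $[\tau_i,\tau_{i+1})$ (in particular $L_s\equiv 0$ on $[0,\tau_1)$). The elementary-integral definition from \cite{rajput}/\cite{marquardt} then gives directly
\begin{equation*}
Y_t=\int_0^t z_H(t,s)\,dL_s=\sum_{i=1}^N z_H(t,\tau_i)\,J_i,
\end{equation*}
and, splitting $[0,t]$ into the subintervals between consecutive jumps,
\begin{equation*}
\int_0^t \frac{d}{ds}z_H(t,s)\,L_s\,ds=\sum_{i=1}^N \Bigl(\sum_{j=1}^i J_j\Bigr)\int_{\tau_i}^{\tau_{i+1}}\frac{d}{ds}z_H(t,s)\,ds.
\end{equation*}

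Second, on each $[\tau_i,\tau_{i+1}]\subset (0,t]$ the map $s\mapsto z_H(t,s)$ is $C^1$ by the explicit formulas in the preliminaries, so the fundamental theorem of calculus replaces the inner integral by $z_H(t,\tau_{i+1})-z_H(t,\tau_i)$. Then I interchange summation order (Abel summation) to obtain
\begin{equation*}
\sum_{i=1}^N\sum_{j=1}^i J_j\bigl[z_H(t,\tau_{i+1})-z_H(t,\tau_i)\bigr]
=\sum_{j=1}^N J_j\bigl[z_H(t,t)-z_H(t,\tau_j)\bigr].
\end{equation*}
The decisive point is that, for $H>\tfrac12$, the prefactor $(t-s)^{H-1/2}$ in $z_H(t,s)$ forces $z_H(t,t)=0$; combining the last two displays therefore gives $\int_0^t \partial_s z_H(t,s)\,L_s\,ds=-Y_t$, which is the claim.

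The main technical point, and the only place the hypothesis $H>\tfrac12$ is really used, is justifying the FTC on the last subinterval $[\tau_N,t]$: near $s=t$ the derivative $\partial_s z_H(t,s)$ blows up only like $(t-s)^{H-3/2}$, which is $L^1$ on $[\tau_N,t]$ precisely because $H>\tfrac12$, and the boundary value $z_H(t,t)=0$ is obtained from the same power-type behaviour. Note that no issue arises at $s=0$, even though $z_H(t,s)$ and $\partial_s z_H(t,s)$ are singular there, because $L_s\equiv 0$ on $[0,\tau_1)$ annihilates the integrand in a neighbourhood of the origin. Everything else is a rearrangement of finite sums.
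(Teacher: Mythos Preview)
Your proof is correct and rests on the same two facts as the paper's: (i) a compound Poisson path is identically $0$ on some $[0,\epsilon]$, which disposes of the singularity of $z_H(t,\cdot)$ and $\partial_s z_H(t,\cdot)$ at $s=0$; and (ii) $z_H(t,t)=0$ for $H>\tfrac12$, which kills the boundary term at $s=t$. The paper simply quotes an integration-by-parts lemma of Eberlein--Raible on $[\epsilon,t]$ to obtain
\[
\int_\epsilon^t z_H(t,s)\,dL_s = z_H(t,t)L_t - z_H(t,\epsilon)L_\epsilon - \int_\epsilon^t \partial_s z_H(t,s)\,L_s\,ds,
\]
whereas you unpack that lemma by hand for piecewise-constant $L$: you write both sides as finite sums over the jump times and match them via Abel summation. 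Your version is more self-contained and makes explicit the one genuinely analytic point (the $(t-s)^{H-3/2}$ blow-up of $\partial_s z_H$ at $s=t$ is $L^1$ exactly when $H>\tfrac12$), which the paper leaves implicit in the cited lemma; the paper's version is shorter and would generalise more readily beyond the compound Poisson case if one had better control near $s=0$.
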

\begin{proof}
Fix $\omega \in \Omega$. A. s. there exists $\epsilon>0$ such that $L_s=0$ for all $s\in[0,\epsilon]$. The kernel $z_H(t,\cdot)$ is continuous when $s\neq 0$ by~\cite{jost}. Also its derivative is continuous on $(0,t)$. We get now by Lemma~2.1. of~\cite{eberlein} that
\begin{align*}
&\int_0^tz_H(t,s)dL_s=\int_{\epsilon}^t z_H(t,s)dL_s\\
=&z_H(t,t)L_t-z_H(t,\epsilon)L_\epsilon-\int_{\epsilon}^t \left ( \frac{d}{ds}z_H(t,s)\right )L_sds\\
=&-\int_0^t \left ( \frac{d}{ds}z_H(t,s)\right )L_sds.
\end{align*}
\end{proof}
The problem with the pathwise construction of fLpMG (when not in compound Poisson case) is that for $H>\frac{1}{2}$, the Molchan-Golosov kernel $z_H(t,\cdot)$ does not vanish at the origin like the Mandelbrot-Van Ness kernel does. However, the paths of the fLpMG are continuous when $H>\frac{1}{2}$ as is illustrated by the following theorem.

\begin{prop}
\begin{enumerate}
\item For $H>\frac{1}{2}$, an fLpMG $Y$ on $[0,T]$ has a. s. H\"older continuous paths of any order strictly less than $H-\frac{1}{2}$.
\item For $H<\frac{1}{2}$, an fLpMG $Y$ has discontinuous sample paths with positive probability.
\item For $H<\frac{1}{2}$, an fLpMG $Y$ has unbounded sample paths with positive probability.
\end{enumerate}
\end{prop}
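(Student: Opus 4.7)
For part (1), the plan is to invoke Kolmogorov--Chentsov on the exact $L^2$-increment identity
$$\E(Y_t-Y_s)^2=\E L_1^2\,|t-s|^{2H}$$
already furnished by the isometry above. With exponent $\alpha=2$ and regularity parameter $\beta=2H-1$, the hypothesis of the continuity criterion is fulfilled precisely when $H>\tfrac12$, and the conclusion delivers a modification whose sample paths are H\"older of every order $\gamma<\beta/\alpha=H-\tfrac12$. Since the $L^2$-construction pins $Y$ down only up to modification, this H\"older modification \emph{is} the fLpMG.

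For (2) and (3), the crucial input is the Molchan--Golosov diagonal singularity: evaluating $F(\tfrac12-H,H-\tfrac12,H+\tfrac12,0)=1$ in the hypergeometric representation yields
$$z_H(t,s)\sim c_H(t-s)^{H-\tfrac12}\quad\text{as }s\uparrow t,$$
which diverges when $H<\tfrac12$. I would first treat the case where $L$ is compound Poisson. There the elementary-integral construction agrees pathwise with the finite sum $Y_t=\sum_{0<\tau\le t}z_H(t,\tau)\Delta L_\tau$; if $\tau_1$ denotes the first jump time and $\Delta L_{\tau_1}\neq 0$, isolating that single term gives $Y_t=\Delta L_{\tau_1}\,z_H(t,\tau_1)+O(1)$ as $t\downarrow\tau_1$, so $|Y_t|\to\infty$ on the positive-probability event $\{\tau_1<T\}$. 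This simultaneously delivers unboundedness and (hence) discontinuity in the compound Poisson case.

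For a general $L$ with nontrivial L\'evy measure, fix $\epsilon>0$ with $\nu(\{|x|>\epsilon\})>0$ and decompose $L=L^{(c)}+L^{(r)}$ into independent summands, namely the compound Poisson of jumps of size exceeding $\epsilon$ and the square-integrable remainder. By linearity $Y=Y^{(c)}+Y^{(r)}$ with $Y^{(c)}$ and $Y^{(r)}$ independent, and on the positive-probability event that $L^{(c)}$ has a single jump in $(0,T)$ at time $\tau$ of size $\Delta\neq 0$, the divergence $|Y^{(c)}_t|\to\infty$ as $t\downarrow\tau$ is inherited from the compound Poisson step. The main obstacle is then to rule out a pathwise cancellation by $Y^{(r)}$ in a right neighbourhood of $\tau$: because $Y^{(r)}$ is independent of $L^{(c)}$ and in particular of the (atomlessly distributed) jump time $\tau$, while $\E(Y^{(r)}_t)^2<\infty$ for every fixed $t$, a Fubini argument conditioning on $L^{(c)}$ shows that with full conditional probability $Y^{(r)}$ carries no kernel-shaped singularity at the specific random point $\tau$, so the divergence of $Y^{(c)}$ passes through to $Y$. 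This yields (3), whence (2) follows a fortiori.
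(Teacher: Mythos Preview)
Your treatment of part~(1) coincides with the paper's: both invoke Kolmogorov--Chentsov on the increment identity $\E(Y_t-Y_s)^2=\E L_1^2\,|t-s|^{2H}$, read off as $|t-s|^{1+2(H-1/2)}$.

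For parts~(2) and~(3) you take a genuinely different route. The paper simply observes that for $H<\tfrac12$ the map $t\mapsto z_H(t,s)$ is unbounded and discontinuous for every fixed $s\in(0,T)$, and then quotes Theorem~4 of Rosinski (1989), which transfers unboundedness and discontinuity of the kernel to the sample paths of $\int_0^t z_H(t,s)\,dL_s$ with positive probability. Your approach is more self-contained: the compound Poisson step is correct and clean, and the decomposition $L=L^{(c)}+L^{(r)}$ is the right idea for the general case. The one place that wants tightening is the ``no cancellation'' claim. A short way to close it: condition on $\{\tau=s,\ \Delta=d\}$ with $d\ne 0$ and pick any sequence $t_n\downarrow s$; if the event $\{\sup_n|Y^{(r)}_{t_n}+d\,z_H(t_n,s)|<\infty\}$ had probability one, then $|Y^{(r)}_{t_n}|\to\infty$ almost surely, which contradicts $\sup_n\E\bigl(Y^{(r)}_{t_n}\bigr)^2<\infty$ via Fatou's lemma. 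Hence the conditional probability of blow-up is strictly positive for every $(s,d)$, and integrating over the law of $(\tau,\Delta)$ gives the result. (A minor side remark: the pieces $L^{(c)}$, $L^{(r)}$ need not be centred, so $Y^{(r)}$ is not literally an fLpMG; this is harmless since the second-moment bound still holds after centring and adding back the finite drift term $\mu\int_0^t z_H(t,s)\,ds$.)

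In short: your elementary argument works and avoids the black-box citation, at the cost of the extra decomposition step and the Fatou remark; the paper's one-line appeal to Rosinski is quicker and applies to general kernels beyond Molchan--Golosov.
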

\begin{proof}
Let $H>\frac{1}{2}$. It holds that
\begin{equation*}
\E |Y_t-Y_s|^2=|t-s|^{2H}=|t-s|^{1+2(H-\frac{1}{2})}.
\end{equation*}
The first assertion follows now from the Kolmogorov-Chentsov theorem. See for example~\cite{karatzasshreve}.

Let now $H<\frac{1}{2}$. We know that in this case the mapping $t\mapsto z_H(t,s)$ is unbounded and discontinuous for all $s\in(0,T)$. Thus by theorem~4 of~\cite{rosinski2} we know that the sample paths of $Y$ are unbounded with positive probability and also discontinuous with positive probability.
\end{proof}
\begin{rem}
Analogously one can prove that an fLpMvN has unbounded and discontinuous paths with positive probability when $H<\frac{1}{2}$.
\end{rem}
Besides continuity, the sample paths have also the zero quadratic variation property for $H>\frac{1}{2}$. This is illustrated in the following theorem where we compute the quadratic variation over the dyadic sequence of partitions.
\begin{theor}
\label{flpmgqv}
Let $(Y_t)_{t_{\geq 0}}$ be a fLpMG with $H>\frac{1}{2}$. Then for all $t>0$ it holds that
\begin{equation*}
\sum_{j=1}^{2^n} \left ( Y_{\frac{j}{2^n}t}-Y_{\frac{j-1}{2^n}t}\right )^2\rightarrow 0\quad \text{a. s. when } n \rightarrow \infty.
\end{equation*}
\end{theor}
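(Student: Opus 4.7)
The plan is to exploit the $L^2$-isometry (equivalently, the autocovariance formula proved just above) to control the first moment of the dyadic sum, and then bootstrap this to almost sure convergence via Borel--Cantelli. Concretely, write
\begin{equation*}
S_n(t)=\sum_{j=1}^{2^n}\bigl(Y_{jt/2^n}-Y_{(j-1)t/2^n}\bigr)^2.
\end{equation*}
From the autocovariance proposition we already have $\E(Y_u-Y_v)^2=\E L_1^2\cdot |u-v|^{2H}$, so by linearity
\begin{equation*}
\E S_n(t)=2^n\cdot \E L_1^2\cdot (t/2^n)^{2H}=t^{2H}\E L_1^2\cdot 2^{n(1-2H)}.
\end{equation*}

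Since $H>\tfrac12$, the exponent $1-2H<0$, and hence $\E S_n(t)\to 0$ geometrically. This already yields $L^1$ convergence (and in particular convergence in probability). To upgrade to almost sure convergence, I would fix $\varepsilon>0$ and apply Markov's inequality:
\begin{equation*}
\Pro\bigl(S_n(t)>\varepsilon\bigr)\le \varepsilon^{-1}\E S_n(t)=\varepsilon^{-1}t^{2H}\E L_1^2\cdot 2^{n(1-2H)}.
\end{equation*}
Because $\sum_{n\ge 1}2^{n(1-2H)}<\infty$, the Borel--Cantelli lemma gives $\Pro(S_n(t)>\varepsilon\text{ i.o.})=0$. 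Running through a countable sequence $\varepsilon_k\downarrow 0$ then yields $S_n(t)\to 0$ almost surely.

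There is no real obstacle here, which is the main point worth emphasising: one needs only the second-moment structure coming from the $L^2$-isometry, not any moment assumption beyond $\E L_1^2<\infty$, and in particular no higher-moment estimate on $Y_t-Y_s$ is required because the terms of $S_n(t)$ are non-negative and their expectations are already summable in $n$. The only small point to note is that we are restricting to the dyadic subsequence, which is exactly what makes the Borel--Cantelli step work; an analogous statement along an arbitrary sequence of partitions with mesh tending to zero would need the partitions to be nested or sufficiently sparse, but for dyadic partitions no additional argument is needed.
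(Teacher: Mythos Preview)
Your proof is correct and follows essentially the same route as the paper: compute $\E S_n(t)=t^{2H}(\E L_1^2)2^{n(1-2H)}$, then combine Markov's inequality with Borel--Cantelli. The only cosmetic difference is that the paper applies Markov to the centered quantity $|V_n-\E V_n|$ with thresholds $1/n$ (bounding $\E|V_n-\E V_n|\le 2\E V_n$) before concluding, whereas you apply it directly to the nonnegative $S_n(t)$ with a fixed $\varepsilon$; your version is slightly more streamlined but the substance is identical.
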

\begin{proof}
Set
\begin{equation*}
V_n=\sum_{j=1}^{2^n}\left ( Y_{\frac{j}{2^n}t}-Y_{\frac{j-1}{2^n}t}\right )^2.
\end{equation*}
Now we have that
\begin{equation*}
\E V_n=(\E L_1^2) 2^n \left ( \frac{t}{2^n}\right )^{2H}=(\E L_1^2) t^{2H}2^{-n(2H-1)}.
\end{equation*}
We obtain using Markov inequality (see e.g.~\cite{jacodprotter}) that
\begin{align*}
&\sum_{n=1}^\infty \Pro\left (|V_n-\E V_n|\geq \frac{1}{n}\right )\leq \sum_{n=1}^\infty n\E |V_n-\E V_n|\\\leq &\sum_{n=1}^\infty n 2 \E V_n=2t^{2H} (\E L_1^2) \sum_{n=1}^\infty n 2^{-n(2H-1)}<\infty.
\end{align*}
We use now Borel-Cantelli theorem (see~\cite{jacodprotter}) and obtain that
\begin{equation*}
V_n -\E V_n\rightarrow 0, \quad \text{as } n \rightarrow \infty \text{ a. s.}
\end{equation*}
On the other hand $\E V_n \rightarrow_{n\rightarrow \infty}0$. Thus $V_n\rightarrow_{n\rightarrow \infty}0$ a. s.
\end{proof}
Note that the same proof works also in the case of fLpMvN. Thus, we have the following theorem.
\begin{theor}
\label{flpmvnqv}
Let $X$ be an fLpMvN with $H>\frac{1}{2}$. Then for $-\infty<s<t<\infty$ it holds that
\begin{equation*}
\sum_{j=1}^{2^n} \left ( X_{s+\frac{j}{2^n}(t-s)}-X_{s+\frac{j-1}{2^n}(t-s)}\right )^2 \rightarrow 0 \quad \text{a.s. when }n \rightarrow \infty.
\end{equation*}
\end{theor}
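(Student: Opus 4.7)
The plan is to mimic verbatim the proof of Theorem~\ref{flpmgqv}, with only a cosmetic change to accommodate the translation by $s$. The key input needed beyond what was used there is that fLpMvN has stationary increments with exactly the same variance function as fLpMG. This follows from the Mandelbrot-Van Ness representation: writing
\begin{equation*}
X_t - X_s = \int_{-\infty}^t f_H(t,u)\,dL_u - \int_{-\infty}^s f_H(s,u)\,dL_u,
\end{equation*}
and using the translation identity $f_H(t,u) - f_H(s,u) = f_H(t-s, u-s)$ (so that only the relative position matters), together with the stationarity of increments of the driving Lévy process, one recovers the $L^2$-isometry formula $\E(X_t - X_s)^2 = (\E L_1^2)\,|t-s|^{2H}$, already recorded in~\cite{marquardt}.

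Granted this, I would define
\begin{equation*}
V_n = \sum_{j=1}^{2^n}\left(X_{s+\frac{j}{2^n}(t-s)} - X_{s+\frac{j-1}{2^n}(t-s)}\right)^2,
\end{equation*}
and compute directly
\begin{equation*}
\E V_n = (\E L_1^2)\, 2^n \left(\frac{t-s}{2^n}\right)^{2H} = (\E L_1^2)(t-s)^{2H}\, 2^{-n(2H-1)},
\end{equation*}
which tends to $0$ because $H>\tfrac{1}{2}$. Then I would run the Markov--Borel-Cantelli chain exactly as in Theorem~\ref{flpmgqv}: the bound $\Pro(|V_n-\E V_n|\geq 1/n) \leq n\,\E|V_n - \E V_n| \leq 2n\, \E V_n$ gives a summable series, so $V_n - \E V_n \to 0$ almost surely, and combined with $\E V_n \to 0$ one concludes $V_n \to 0$ a.s.

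There is no real obstacle here, since the argument is essentially atomic in the inputs: stationarity of increments plus the scaling $|t-s|^{2H}$ of the second moment. The only mild subtlety is justifying the stationarity of increments of the fLpMvN, which does not appear explicitly in the excerpt but is immediate from the Mandelbrot-Van Ness kernel structure and the two-sided Lévy construction. Once that is cited from~\cite{marquardt}, the remainder is a line-by-line transcription of the previous proof.
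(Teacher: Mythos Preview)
Your proposal is correct and is exactly what the paper does: it simply remarks that ``the same proof works also in the case of fLpMvN'' and restates the theorem without a separate argument. Your only addition is making explicit why the variance identity $\E(X_t-X_s)^2=(\E L_1^2)\,|t-s|^{2H}$ holds for fLpMvN (via stationarity of increments from~\cite{marquardt}), which is precisely the one ingredient needed to transcribe the proof of Theorem~\ref{flpmgqv}.
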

\begin{prop}[Characteristic function]
\label{mgflpcf}
Let $u_1,\dots,u_n \in \R$ and $0<t_1<\dots<t_n<\infty$. Then
\begin{equation*}
\E \left ( \exp \left ( i\sum_{j=1}^n u_j Y_{t_j}\right )\right )= \exp \left ( \int_\R \Psi \left ( \sum_{j=1}^n u_j z_H(t_j,s)\right )ds\right ),
\end{equation*}
where $\Psi$ is the characteristic exponent of the driving L\'evy process $L$. Moreover, $Y_t$ is infinitely divisible for all $t\geq 0$.
\end{prop}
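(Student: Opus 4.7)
My plan is to reduce the statement to a single stochastic integral against $L$ and invoke the Rajput-Rosi\'nski characteristic-function formula for deterministic integrands (which underlies Marquardt's Proposition~2.1 already used above). By linearity of the $L^2$-integral,
$$\sum_{j=1}^n u_j Y_{t_j}=\int_\R g(s)\,dL_s, \qquad g(s):=\sum_{j=1}^n u_j z_H(t_j,s),$$
and $g\in L^2(\R)$ because each $z_H(t_j,\cdot)$ is supported in $(0,t_j)$ and lies in $L^2$ by the isometry. So it suffices to prove, for every $g\in L^2(\R)$,
$$\E\exp\Bigl(i\int_\R g(s)\,dL_s\Bigr)=\exp\Bigl(\int_\R \Psi(g(s))\,ds\Bigr).$$

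For a simple integrand $g=\sum_k a_k \mathbf{1}_{(s_k,s_{k+1}]}$, the independence and stationarity of the increments of $L$, together with the L\'evy-Khinchine formula $\E e^{iuL_t}=e^{t\Psi(u)}$, give
$$\E\exp\Bigl(i\int g\,dL_s\Bigr)=\prod_k e^{(s_{k+1}-s_k)\Psi(a_k)}=\exp\Bigl(\int\Psi(g(s))\,ds\Bigr),$$
so the identity holds. For a general $g\in L^2(\R)$ I would approximate by simple $g_m\to g$ in $L^2$; the isometry yields $\int g_m\,dL\to\int g\,dL$ in $L^2$ and hence in distribution, so the left-hand characteristic functions converge. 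For the right side I would use the bound $|\Psi(v)|\le\tfrac12 v^2\,\E L_1^2$, which follows from $|e^{ivx}-1-ivx|\le (vx)^2/2$ together with $\sigma^2=0$ and $\E L_1=0$; this yields $\Psi\circ g_m\to\Psi\circ g$ in $L^1(\R)$ by dominated convergence (after passing to a pointwise-convergent subsequence, with dominant a multiple of $g_m^2$ controlled by $(g_m-g)^2+g^2$), completing the extension.

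Infinite divisibility of $Y_t$ falls out of the same formula. For every $n\in\N$, the map $\tfrac{1}{n}\Psi(u)=\int(e^{iux}-1-iux)\,(\nu/n)(dx)$ is the characteristic exponent of a centred L\'evy process with triplet $(0,0,\nu/n)$. Applying the formula just proved to this rescaled driver shows that
$$\bigl(\E e^{iuY_t}\bigr)^{1/n}=\exp\Bigl(\int_\R \tfrac{1}{n}\Psi\bigl(u z_H(t,s)\bigr)\,ds\Bigr)$$
is itself the characteristic function of a probability distribution (namely that of an fLpMG at time $t$ driven by the rescaled L\'evy process), so $Y_t$ is infinitely divisible. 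The main obstacle is the dominated-convergence step on the exponent side, where one must control $\Psi\circ g_m$ uniformly in $m$; everything else is routine L\'evy-Khinchine bookkeeping.
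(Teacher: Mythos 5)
Your argument is correct, but it is much more self-contained than the paper's, which disposes of the proposition in one line by citing Rajput and Rosi\'nski. You in effect re-prove the needed special case of their spectral representation: reduce $\sum_j u_j Y_{t_j}$ to a single integral $\int g\,dL$ with $g=\sum_j u_j z_H(t_j,\cdot)\in L^2$, verify the identity $\E\exp(i\int g\,dL)=\exp(\int\Psi(g(s))\,ds)$ for step functions via independence and stationarity of increments, and pass to the limit using the two-sided control $|\Psi(v)|\le\tfrac12 v^2\,\E L_1^2$. This buys transparency about exactly which hypotheses are used (only $\E L_1=0$, $\E L_1^2<\infty$, $\sigma^2=0$), at the cost of one step you state a bit loosely: in the dominated-convergence argument the dominating function $\tfrac12\E L_1^2\,g_m^2$ varies with $m$, so you need the generalized dominated convergence theorem (Pratt's lemma) together with the fact that $g_m^2\to g^2$ in $L^1$ along an a.e.-convergent subsequence (Scheff\'e, since $\|g_m\|_2\to\|g\|_2$), followed by the sub-subsequence trick for the full sequence; this is standard and closes the gap. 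Your derivation of infinite divisibility, by recognizing $(\E e^{iuY_t})^{1/n}$ as the characteristic function of the fLpMG driven by the L\'evy process with triplet $(0,0,\nu/n)$, is a clean argument that the paper leaves implicit in the same citation.
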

\begin{proof}
This follows, for example, from~\cite{rajput}.
\end{proof}
\begin{prop}
The increments of fLpMG are not always stationary.
\end{prop}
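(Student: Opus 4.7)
The plan is to exhibit a single driving L\'evy process for which the third cumulant of $Y_t - Y_s$ fails to be a function of $t - s$ alone, contradicting stationarity. I would take $L$ to be a centered unit-rate Poisson process (so $\nu = \delta_1$), for which $\kappa_n(L_1) = \int x^n\,\nu(dx) = 1$ for every $n \geq 2$. Decomposing $Y_t - Y_s = \int_0^s(z_H(t,v) - z_H(s,v))\,dL_v + \int_s^t z_H(t,v)\,dL_v$ and applying Proposition~\ref{mgflpcf} together with the Taylor series of $\Psi$ at the origin, one reads off
\begin{equation*}
\kappa_n(Y_t - Y_s) = \int_0^s (z_H(t,v) - z_H(s,v))^n\,dv + \int_s^t z_H(t,v)^n\,dv, \qquad n \geq 2.
\end{equation*}
Combined with the scaling $z_H(\lambda t,\lambda v) = \lambda^{H-1/2}z_H(t,v)$, which is immediate from the definition of $z_H$ and yields $\kappa_n(Y_r) = A_n r^{1+n(H-1/2)}$ with $A_n = \int_0^1 z_H(1,w)^n\,dw$, stationarity would force
\begin{equation*}
\int_0^s (z_H(t,v) - z_H(s,v))^3\,dv + \int_s^t z_H(t,v)^3\,dv = A_3(t-s)^{3H-1/2}
\end{equation*}
for all $0 \leq s \leq t$. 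The analogous $n=2$ identity does hold and simply reproduces the fBm covariance, so the contradiction must come from $n=3$.

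To disprove the $n=3$ identity I would fix $H \in (\tfrac{1}{2}, \tfrac{5}{6})$, fix $s>0$, and send $t \to \infty$. From $z_H(t,v) = t^{H-1/2} z_H(1, v/t)$ and the behavior $z_H(1, w) \sim (c_H/2) w^{1/2 - H}$ as $w \to 0^+$ (available from the integral form of $z_H$ valid for $H > 1/2$), one gets $z_H(t,v) \sim (c_H/2) t^{2H-1} v^{1/2-H}$ on $(0, s]$, so that $z_H(s,v)$ is a lower-order perturbation of $z_H(t,v)$ there. Expanding the cubic $(z_H(t,v) - z_H(s,v))^3$ and using the identity $\int_0^s z_H(t,v)^3\,dv + \int_s^t z_H(t,v)^3\,dv = A_3 t^{3H-1/2}$, the left-hand side becomes $A_3 t^{3H-1/2} - \alpha(s) t^{4H-2} + O(t^{2H-1})$ for some $\alpha(s)>0$ built from $\int_0^s v^{1-2H} z_H(s,v)\,dv$, while Taylor expanding the right-hand side gives $A_3 t^{3H-1/2} - A_3(3H - \tfrac{1}{2}) s\,t^{3H-3/2} + O(t^{3H-5/2})$. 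Because $4H - 2 > 3H - 3/2$ precisely when $H > \tfrac{1}{2}$, the two correction terms live on strictly different scales and cannot cancel, contradicting the putative identity.

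The main obstacle is the uniform control of the subleading cross-terms in the cubic expansion and the positivity of $\alpha(s)$; this reduces to integrability of $v^{1-2H} z_H(s,v)$ on $(0,s]$, which is where the restriction $H < \tfrac{5}{6}$ enters through the singular behavior of $z_H$ at the origin. The residual range $H \in [\tfrac{5}{6}, 1)$ can be treated either by the parallel $n=4$ argument (since $\kappa_4(L_1) = 1$ for our choice of $L$) or by a direct finite computation of both sides at specific values of $s$ and $t$; either route secures non-stationarity for at least one fLpMG, which is all the proposition asserts.
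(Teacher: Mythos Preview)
Your approach is correct and genuinely different from the paper's. The paper takes the symmetric compound Poisson driver $\nu=\tfrac{1}{2}(\delta_1+\delta_{-1})$ and compares the point masses $\Pro(Y_\epsilon-Y_0=0)$ and $\Pro(Y_{1+\epsilon}-Y_1=0)$ directly: the former is bounded below by the probability of no jump on $[0,\epsilon]$, while the latter is bounded above using the pathwise construction (Proposition~\ref{pathwise}) on the event of exactly one jump in $[0,1]$ and none in $(1,1+\epsilon]$. This is a short, purely probabilistic argument that needs no asymptotic analysis of $z_H$ and no moment computations beyond identifying a single atom.

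Your route via the third cumulant is more analytic: it requires the scaling of $z_H$, the small-$v$ behaviour $z_H(s,v)\sim\tfrac{c_H}{2}s^{2H-1}v^{1/2-H}$, and a dominated-convergence justification for $\int_0^s z_H(t,v)^2 z_H(s,v)\,dv\sim (c_H/2)^2 t^{4H-2}\int_0^s v^{1-2H}z_H(s,v)\,dv$ (which does go through since $z_H(t,v)/t^{2H-1}$ is bounded uniformly in $t$ by $C v^{1/2-H}$). The integrability check $\int_0^s v^{1-2H}z_H(s,v)\,dv<\infty$ for $H<\tfrac{5}{6}$ is exactly the condition the paper isolates in Lemma~\ref{ikm} for $K=3$. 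What your approach buys is a quantitative distinction---you actually exhibit which cumulant breaks stationarity and at which asymptotic scale---whereas the paper's argument is qualitatively sharper in the sense that it pinpoints a distributional difference visible already at a single atom, without any restriction on $H\in(\tfrac{1}{2},1)$ and without needing to argue separately for $H\geq\tfrac{5}{6}$. Since the proposition only asks for one counterexample, both settle it; your final remark that a single $H\in(\tfrac{1}{2},\tfrac{5}{6})$ suffices is exactly right, and the discussion of the residual range is unnecessary.
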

\begin{proof}
Consider fLpMG $Y$ driven by a compound Poisson process $L$ with L\'evy measure $\nu=\frac{1}{2}(\delta_1+\delta_{-1})$, where $\delta_\cdot$ denotes the Dirac delta. For $\epsilon>0$
\begin{equation*}
\Pro(Y_\epsilon-Y_0=0)\geq \Pro(L_t=0 \quad\forall t\leq \epsilon)= 1-e^{-\epsilon}.
\end{equation*}
On the other hand, consider set
\begin{equation*}
A=\{\omega \in \Omega \text{ s.t. } \#\{s\in[0,1]\text{ s.t. }\Delta L_s\neq 0\}=1 \text{ and } \#\{s\in (1,\epsilon] \text{ s.t. }\Delta L_s\neq 0\}=0\}.
\end{equation*}
In set $A$, there is one jump time $S\in[0,1]$ and $\Delta L_S=\pm 1$. It follows that
\begin{equation*}
Y_{1+\epsilon}-Y_1=\left ( \frac{d}{ds}z_H(1+\epsilon,s)-\frac{d}{ds}z_H(1,s)\right )_{s=S}\Delta L_S\neq 0,
\end{equation*}
by proposition~\ref{pathwise}. We also have that
\begin{equation*}
\Pro(Y_{1+\epsilon}-Y_1=0)\leq 1-\Pro(A)=1-e^{-1}(1-e^{-\epsilon})<1-e^{-\epsilon},
\end{equation*}
when $\epsilon>0$ small enough. Thus, $Y_\epsilon-Y_0\stackrel{d}{\neq}Y_{1+\epsilon}-Y_1$. Hence, the increments of fLpMG $Y$ are not stationary.
\end{proof}
\begin{theor}[Self-similarity]
Fractional L\'evy process by Molchan-Golosov transformation cannot be self-similar for $H\in(0,1)$.
\end{theor}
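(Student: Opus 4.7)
The plan is to argue by contradiction: assume $Y$ is $H'$-self-similar. First, the $L^2$-isometry $\E Y_t^2 = (\E L_1^2)\int_0^t z_H(t,s)^2\,ds$ combined with the scaling $z_H(at,as)=a^{H-1/2}z_H(t,s)$, immediate from the hypergeometric form of the kernel, gives $\E Y_t^2=t^{2H}(\E L_1^2)\int_0^1 z_H(1,r)^2\,dr$. Matching with $t^{2H'}\E Y_1^2$ forces $H'=H$ (since $\E Y_1^2>0$), so it suffices to rule out $Y_t\stackrel{d}{=}t^HY_1$ for every $t>0$.

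By Proposition~\ref{mgflpcf} and the kernel scaling, $\log\E e^{iuY_t}=t\int_0^1\Psi(u\,t^{H-1/2}g(r))\,dr$ with $g(r):=z_H(1,r)$, whereas $\log\E e^{iut^HY_1}=\int_0^1\Psi(ut^H g(r))\,dr$. Equating these for all $u,t>0$ and applying uniqueness of the L\'evy--Khinchine triplet---together with the Rajput--Rosinski formula $\mu_t(B)=\int_0^t\nu(B/z_H(t,s))\,ds$ for the L\'evy measure of $Y_t$---translates into the identity
\begin{equation*}
t\int_0^1\nu\bigl(B/(t^{H-1/2}g(r))\bigr)\,dr=\int_0^1\nu\bigl(B/(t^Hg(r))\bigr)\,dr
\end{equation*}
for every Borel $B\subset\R\setminus\{0\}$ and $t>0$.

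I then test this identity on $B=(a,\infty)$, $a>0$, and substitute $y:=a/t^H$. Assuming $g>0$ (true on $(0,1)$ for $H>1/2$), it reduces to $tF(yt^{1/2})=F(y)$ where $F(y):=\int_0^1\bar\nu(y/g(r))\,dr$ and $\bar\nu(z):=\nu((z,\infty))$. Setting $z:=yt^{1/2}$ and varying $t$ gives $y^2F(y)=z^2F(z)$, hence $F(y)\equiv C/y^2$ for some $C\ge 0$. On the other hand, Fubini with the substitution $u=y/g(r)$ yields $\int_0^\infty yF(y)\,dy = \bigl(\int_0^1 g(r)^2\,dr\bigr)\cdot\frac{1}{2}\int_0^\infty x^2\,\nu(dx)<\infty$ by the finite-variance hypothesis, whereas $\int_0^\infty y(C/y^2)\,dy=\infty$ unless $C=0$. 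So $C=0$, forcing $\bar\nu\equiv 0$; repeating with $B=(-\infty,-a)$ annihilates the negative tail, giving $\nu\equiv 0$, contrary to non-degeneracy.

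The hard part is handling $H<1/2$: the Molchan--Golosov kernel may change sign and has an endpoint singularity, so one must decompose $F$ into contributions from $\{g>0\}$ and $\{g<0\}$, verify each obeys the same $y^{-2}$ law under the functional equation, and confirm that the range of $\{y/|g(r)|\}$ covers $(0,\infty)$ densely so that $F\equiv 0$ really implies $\bar\nu\equiv 0$.
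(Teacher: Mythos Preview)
Your argument is correct and complete for $H>\tfrac12$, and in fact it extends without change to $H<\tfrac12$: the Molchan--Golosov kernel $z_H(t,s)$ is strictly positive on $0<s<t$ for \emph{every} $H\in(0,1)$. For $H<\tfrac12$ this is clear from the alternative form
\[
z_H(t,s)=c_H\Bigl[(t/s)^{H-1/2}(t-s)^{H-1/2}+\bigl(\tfrac12-H\bigr)\,s^{1/2-H}\!\int_s^t u^{H-3/2}(u-s)^{H-1/2}\,du\Bigr],
\]
in which both summands are positive. So the sign worry in your last paragraph is unfounded, and your tail-function argument goes through verbatim for all $H$.

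The paper takes a different, more structural route. It does not pin down the self-similarity index in advance; instead it shows that self-similarity of any index $\alpha$ forces the L\'evy measure of $Y_1$ to satisfy $\nu_{Y_1}=b^{1/(H-1/2-\alpha)}T_b\nu_{Y_1}$ for all $b>0$, where $T_b$ is the dilation operator. This is exactly the scaling that characterizes a $\beta$-stable L\'evy measure with $\beta=-1/(H-1/2-\alpha)$, and Sato's classification then yields the contradiction: $\beta=2$ is Gaussian (excluded by assumption), while $\beta<2$ forces infinite variance. Your approach is essentially this argument specialized to $\alpha=H$ (which you justify via second moments) and carried out by hand: the relation $y^2F(y)=\text{const}$ you derive is precisely the $\beta=2$ scaling for the tail, and your integrability check $\int_0^\infty yF(y)\,dy<\infty$ is the concrete statement that a nontrivial $2$-stable L\'evy measure cannot have finite second moment. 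The paper's version is cleaner in that it never tests on half-lines (so positivity of the kernel is irrelevant), at the price of invoking stable-law theory; yours is more elementary and self-contained.
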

\begin{proof}
Assume that the process $Y$ is self-similar with some index $\alpha$. Then we have for all $c>0$ that
\begin{equation*}
\left (Y_{ct}\right)_{t\geq 0}\stackrel{d}{=}\left(c^\alpha Y_t\right)_{t\geq 0}.
\end{equation*}
The characteristic function of $Y$ is given by theorem~\ref{mgflpcf}. On the other hand
\begin{align*}
&\E e^{iuY_t}=\E e^{iuc^{-\alpha}Y_{ct}}\\
=&\exp\left ( \int_0^{ct} \int_{\R}\left ( e^{ic^{-\alpha}uxz_H(ct,s)}-1-ic^{-\alpha}uxz_H(ct,s)\right)\nu(dx)ds  \right )\\
=&\exp\left ( \int_0^{t}\int_{\R}\left ( e^{ic^{-\alpha}c^{H-\frac{1}{2}}ux z_H(t,s)}-1-ic^{-\alpha}c^{H-\frac{1}{2}}uxz_H(t,s)\right )\nu(dx)cds\right )\\
=&\exp\left(\int_0^t \int_{\R}\left ( e^{iuxz_H(t,s)}-1-iuxz_H(t,s)\right )c\nu\left(c^{\alpha-H+\frac{1}{2}}dx\right)ds\right ).
\end{align*}
Note that $\alpha-H+\frac{1}{2}=0$ implies that $Y\stackrel{d}{=}cY$ for all $c>0$ which means that $Y=L=0$ identically. We define for $r>0$ the translation operator $T_r$ of measures on $\R$ for $B \in \mathcal{B}(\R)$ by
\begin{equation*}
(T_r\nu)(B)=\nu\left(r^{-1}B\right).
\end{equation*}
Now the L\'evy measure of infinitely divisible random variable $c^{-\alpha}Y_{ct}$ is given by $cT_r\nu_{Y_t}$, where $r=c^{-H+\frac{1}{2}+\alpha}$ and
\begin{equation*}
\nu_{Y_t}(B)=\int_0^t\int_{\R}1_B\left ( z_H(t,s)x\right )\nu(dx)ds.
\end{equation*}
The drift parameter of $Y_1$ is $\gamma_{Y_1}$. It follows from the uniqueness of the generating triplet and self-similarity property that it holds for all $b>0$
\begin{equation*}
\nu_{Y_t}=b^{1/\left(H-\frac{1}{2}-\alpha\right)}T_b\nu_{Y_t}.
\end{equation*}
Denote now $\beta=\frac{-1}{H-\frac{1}{2}-\alpha}$. Let $\mu$ be the distribution of random variable $Y_1$ and let $\hat{\mu}(u)$ be the characteristic function of $\mu$. Random variable $bY_1$ has now characteristic function $\hat{\mu}(bu)$. Because $Y_1$ is infinitely divisible, we can use proposition~11.10~of~\cite{sato} that the triplet of $bY_1$ is $(\gamma(b),0,T_b\nu_{Y_1})$ for some $\gamma(b)$. On the other hand, $\hat{\mu}(u)^{b^\beta}$ is an infinitely divisible characteristic function with triplet $(b^\beta\gamma_{Y_1},0,b^\beta\nu_{Y_1})$. Thus we have for any $b>0$ some $d$ such that
\begin{equation*}
\hat{\mu}(u)^{a}=\hat{\mu}(bu)e^{idu},
\end{equation*}
where $a=b^\beta$. We note that $\R_+\mapsto \R_+: b\mapsto b^\beta$ is one-to-one. Thus, $\mu$ follows a stable law with index $\beta$. The index $\beta \in (0,2]$ by definition~13.5.~of~\cite{sato}. By theorem~14.1.~of~\cite{sato}, $\beta=2$ corresponds to Gaussian case and is thus impossible.
It follows now that $\E Y_t^2=\infty$, which contradicts the fact that $Y_t\in L^2(\Omega,\Pro)$. Thus, fLpMG can never be self-similar of any order $\alpha$.
\end{proof}
\begin{rem}
In~\cite{bender}, the authors define fractional subordinators by Molchan-Golosov transformation using pathwise Riemann-Stieltjes integration. However, these processes are not fLpMG as considered here, since subordinators are increasing L\'evy processes and here we consider only zero mean L\'evy processes. Also the integration concept there is different.
\end{rem}
\section{Relation of the two fLp concepts}
The connection between fractional L\'evy processes by Molchan-Golosov transformation and Mandelbrot-Van Ness transformation is basically the same as in the fBm case. The result in fLp case is new.

Let $H\in\left( 0,1\right )$ and $L$ be a two-sided L\'evy process without Brownian component satisfying $\E L_1=0$ and $\E L_1^2<\infty$. Let $s>0$ and set
\begin{equation*}
Y^s_t=\int_0^tz_H(t,u)dL_{u-s}=c_H\int_0^t(t-u)^{H-\frac{1}{2}}\tilde{F}\left ( \frac{u-t}{u}\right)dL_{u-s}, \quad t \in [0,\infty),
\end{equation*}
which is in fact fLpMG with Hurst parameter $H$. Here
\begin{equation*}
\tilde{F}(x)=F\left ( \frac{1}{2}-H,H-\frac{1}{2},H+\frac{1}{2},x\right),
\end{equation*}
where $F$ is the Gauss' hypergeometric function. Define the time shifted process
\begin{equation*}
Z^s_t=Y^s_{t+s}-Y^s_s, \quad t\in[-s,\infty).
\end{equation*}
In the fBm case this would also be fBm, but in fLpMG case we do not have the stationarity of the increments and we are lacking such an interpretation. Now we substitute $v=u-s$ and obtain a.s. that
\begin{equation*}
Z^s_t=c_H\left ( \int_{-s}^t (t-v)^{H-\frac{1}{2}}\tilde{F}\left ( \frac{v-t}{v+s}\right )dL_v-\int_{-s}^0 (-v)^{H-\frac{1}{2}}\tilde{F}\left( \frac{v}{v+s}\right )dL_v\right ).
\end{equation*}
By~\cite{jost2}, $\tilde{F}(0)=1$ and thus we obtain formally as $s\rightarrow \infty$ that
\begin{equation*}
Z^\infty_t:=\frac{c_H}{C_H}X_t:=c_H \int_\R \left ( (t-v)_+^{H-\frac{1}{2}}-(-v)_+^{H-\frac{1}{2}}\right )dL_v, \quad t\in \R.
\end{equation*}
\begin{theor}
\label{flpmvnflpmgconnection}
For every $t\in \R$ there exist constants $S,C>0$ such that
\begin{equation*}
\E \left ( Z^s_t-Z^\infty_t\right )^2\leq Cs^{2H-2}, \quad \text{for } s>S.
\end{equation*}
\end{theor}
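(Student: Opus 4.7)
First, I apply the $L^2$-isometry for integrals against square-integrable zero-mean L\'evy processes (the same isometry already used in the paper for fLpMG) to reduce the statement to a deterministic estimate,
\[
\E(Z^s_t - Z^\infty_t)^2 = \E L_1^2 \cdot \int_\R (K^s(v,t) - K^\infty(v,t))^2\,dv,
\]
where $K^s$ and $K^\infty$ denote the two deterministic kernels that represent $Z^s_t$ and $Z^\infty_t$ as integrals against the two-sided L\'evy process $L$, extended by zero outside their natural supports $(-s,t)$ and $(-\infty, t)$ respectively.

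To compare the two kernels on a common footing I would exploit the simplified Molchan--Golosov representation that is available for $H>1/2$. After the substitution $u = \rho + s$ it takes the unified form
\[
K^s(v,t) = c_H\bigl(H-\tfrac12\bigr)(v+s)^{1/2-H}\int_{v\vee 0}^{t}(\rho+s)^{H-1/2}(\rho-v)^{H-3/2}d\rho, \qquad v\in(-s,t),
\]
and an analogous identity (without the $(v+s)^{1/2-H}$ and $(\rho+s)^{H-1/2}$ factors) holds for $K^\infty$ on $(-\infty, t)$. Subtracting gives
\[
K^s(v,t) - K^\infty(v,t) = c_H\bigl(H-\tfrac12\bigr)\int_{v\vee 0}^{t}\Bigl[\bigl(\tfrac{\rho+s}{v+s}\bigr)^{H-1/2} - 1\Bigr](\rho-v)^{H-3/2}d\rho
\]
on $(-s,t)$, while on $v\leq -s$ the difference is simply $-K^\infty(v,t)$.

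Next I would split $\R$ into four regions and bound the squared $L^2$-mass of $K^s-K^\infty$ on each. On the far-tail region $v<-s$ the elementary bound $|K^\infty(v,t)|\leq c_H(H-\tfrac12)\,t\,(-v)^{H-3/2}$ yields, upon squaring and integrating, exactly the rate $s^{2H-2}$ that the theorem asks for. On $v\in(0,t)$ the small parameter $\varepsilon := (\rho-v)/(v+s)$ is of order $1/s$, so the linear bound $(1+\varepsilon)^{H-1/2}-1 \leq (H-\tfrac12)\varepsilon$ gives an $s^{-2}$ contribution, negligible against $s^{2H-2}$. On $v\in(-s/2, 0)$ the ratio $\varepsilon$ remains bounded and the same linear bound yields another contribution of order $s^{2H-2}$.

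The only genuinely delicate region is $v\in(-s,-s/2)$, where $\varepsilon$ is unbounded near $v=-s$ and the linear bound explodes through the factor $(v+s)^{-1}$. The remedy is to switch to the sublinear bound $(1+\varepsilon)^{H-1/2}\leq 1 + \varepsilon^{H-1/2}$, valid on $[0,\infty)$ because $H-\tfrac12\in(0,1)$; this introduces a factor $(v+s)^{1/2-H}$ whose square remains integrable near $v=-s$ precisely because $1-2H>-1$, and a direct computation shows that this contribution is once more $O(s^{2H-2})$. Choosing the right bound on $(1+\varepsilon)^{H-1/2}-1$ depending on whether $\varepsilon$ is small or large is the main technical subtlety of the proof; once the four regional bounds are in place, adding them gives the claimed $C s^{2H-2}$ estimate.
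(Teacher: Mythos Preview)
Your reduction via the $L^2$-isometry to a deterministic kernel estimate is exactly the first step of the paper's proof. From there, however, the two routes diverge. The paper rewrites the integrand in the hypergeometric form
\[
\frac{1}{c_H^2\,\E L_1^2}\,\E(Z^s_t-Z^\infty_t)^2
=\int_\R\Bigl((t-v)^{H-\frac12}-(-v)^{H-\frac12}\Bigr)^2 1_{(-\infty,-s)}(v)\,dv
+\int_\R\Bigl(\cdots\tilde F(\cdots)-1\cdots\Bigr)^2 dv
\]
and then simply invokes the proof of Theorem~3.1 of Jost~\cite{jost2}, where the $O(s^{2H-2})$ bound for this very integral is established in the fBm setting. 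In other words, the paper does not carry out any of the region-by-region analysis; the observation is just that after the isometry the L\'evy case and the Gaussian case produce the identical deterministic problem.

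Your alternative is to use the simplified integral representation of $z_H$ (valid for $H>\tfrac12$), express $K^s-K^\infty$ as a single $\rho$-integral, and bound it directly on the four subregions of $(-\infty,t)$. The tail bound on $(-\infty,-s)$, the linear Taylor bound on $(-s/2,t)$, and the sublinear concavity bound $(1+\varepsilon)^{H-1/2}\le 1+\varepsilon^{H-1/2}$ on $(-s,-s/2)$ are all correct and each produces an $O(s^{2H-2})$ contribution, so your argument goes through. Its merit is that it is self-contained and elementary, not relying on the external reference. Its limitation is scope: the paper states the theorem for $H\in(0,1)$, while the simplified integral form of $z_H$ you exploit is only available for $H>\tfrac12$. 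To cover $H<\tfrac12$ you would need either the analogous fractional-derivative representation of $z_H$ or the hypergeometric asymptotics that Jost uses; as written, your proof handles only the long-memory half of the parameter range.
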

\begin{proof}
We obtain
\begin{align*}
&\frac{1}{c_H^2\E L_1^2}\E \left ( Z^s_t-Z^\infty_t\right )^2\\
=&\int_\R \left(\left((t-v)^{H-\frac{1}{2}}-(-v)^{H-\frac{1}{2}}\right)1_{(-\infty,-s)}(v)\right)^2dv\\
+&\int_\R \left((t-v)^{H-\frac{1}{2}}\left ( \tilde{F}\left ( \frac{v-t}{v+s}\right )-1\right)1_{(-s,t)}(v)\right.\\
-&\left.(-v)^{H-\frac{1}{2}}\left ( \tilde{F}\left ( \frac{v}{v+s}\right)-1\right)1_{(-s,0)}(v)\right)^2dv,
\end{align*}
by $L^2$- isometry and independence of increments of $L$. The claim follows now from the proof of Theorem~3.1.~of~\cite{jost2}.
\end{proof}
\section{Wiener integration}
Here our goal  is to define suitable Wiener integrals with respect to fLpMG. In contrary to the case of fLpMvN, we use the fractional integration on a compact interval instead of the whole real line. We will define the space $L^2_H([0,T])$ of integrands as in the case of compact interval Wiener integrals in fBm case.

Let $g$ be a function defined on $[0,T]$ and $I_-^{H-\frac{1}{2}}$ be the right-sided Riemann-Liouville integral operator of order $H-\frac{1}{2}$ as in~\cite{jost}. Define operator
\begin{equation*}
(K^Hg)(s)=\Gamma(H+\frac{1}{2})c_H s^{\frac{1}{2}-H}\left ( I_-^{H-\frac{1}{2}}\left ((\cdot)^{H-\frac{1}{2}}g(\cdot)\right )\right )(s), \quad s,t\in[0,T].
\end{equation*}
Now it holds by~\cite{jost} that
\begin{equation*}
z_H(t,s)=\Gamma(H+\frac{1}{2})c_Hs^{\frac{1}{2}-H}\left ( I_-^{H-\frac{1}{2}} (\cdot)^{H-\frac{1}{2}}1_{[0,t)}\right )(s),
\end{equation*}
for $0\leq t\leq T$. Define now the space
\begin{equation*}
L_H^2([0,T])=\{g \in L^1([0,T])| K^Hg\in L^2([0,T])\},
\end{equation*}
equipped with norm $||g||_{L_H^2([0,T])}:=||K^Hg||_{L^2([0,T])}$. Now we are ready for the definition of Wiener integral.
\begin{defin}[Wiener integral for fLpMG]
\label{wiflpmg}
Let $H\in(0,1)$, $Y$ be a fLpMG driven by L\'evy process $L$. For $g\in L^2_H([0,T])$ the Wiener integral with respect to fLpMG is defined as
\begin{equation*}
\int_0^T g(s)dY_s=\int_0^T (K^H g)(s)dL_s.
\end{equation*}
\end{defin}
Note that the definition is completely analogous to the definition of compact interval Wiener integrals in the fBm setup. Now, let $g$ be a step function, which means that
\begin{equation}
\label{stepfunction}
g(s)=\sum_{j=1}^n a_j 1_{(s_{j-1},s_j]}(s),
\end{equation}
where $a_0,\dots,a_n\in \R$ and $0=s_0<s_1<\dots<s_n=T$. Now $g \in {L}^{2}_{H}([0,T])$ and we have the following result.
\begin{lemma}
Assume $H\in(0,1)$, $Y$ is fLpMG driven by $L$ and $g$ a step function defined by equation~(\ref{stepfunction}). It holds that
\begin{equation*}
\int_0^T g(s)dY_s= \sum_{j=1}^n a_j \left(Y_{s_{j}}-Y_{s_{j-1}}\right).
\end{equation*}
\end{lemma}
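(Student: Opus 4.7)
The plan is a direct unwinding of Definition~\ref{wiflpmg}: apply $K^H$ to the step function, recognize each resulting piece as a Molchan-Golosov kernel, and then invoke the $L^2$-linearity of the elementary integral with respect to $L$.

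First I would write the step function in the telescoping form
\begin{equation*}
g = \sum_{j=1}^n a_j\bigl(1_{[0,s_j)} - 1_{[0,s_{j-1})}\bigr),
\end{equation*}
which agrees with the given representation up to a Lebesgue-null set of endpoints. Since $K^H$ is a composition of the multiplication operators $s \mapsto s^{H-1/2}$ and $s \mapsto s^{1/2-H}$ with the right-sided Riemann-Liouville operator $I_-^{H-1/2}$, all of which are linear on $L^1([0,T])$, it acts linearly on this sum. The key identification is the formula displayed just above Definition~\ref{wiflpmg}, namely
\begin{equation*}
z_H(t,\cdot) = K^H 1_{[0,t)},
\end{equation*}
which gives
\begin{equation*}
(K^H g)(s) = \sum_{j=1}^n a_j\bigl(z_H(s_j,s) - z_H(s_{j-1},s)\bigr).
\end{equation*}

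Next I plug this into Definition~\ref{wiflpmg}. The Wiener integral of an elementary integrand with respect to $L$ (in the sense of Rajput-Rosinski / Marquardt used to define fLpMG) is linear and satisfies the $L^2$-isometry, so
\begin{equation*}
\int_0^T g(s)\,dY_s = \int_0^T (K^H g)(s)\,dL_s = \sum_{j=1}^n a_j \int_0^T \bigl(z_H(s_j,s) - z_H(s_{j-1},s)\bigr)\,dL_s.
\end{equation*}
Finally, because $z_H(t,s) = 0$ for $s > t$, the integral $\int_0^T z_H(s_j,s)\,dL_s$ reduces to $\int_0^{s_j} z_H(s_j,s)\,dL_s$, which is exactly $Y_{s_j}$ by Definition~\ref{flpmgdef}. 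Collecting terms yields the claim.

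There is really no obstacle here: the argument is essentially bookkeeping. The only point that deserves care is to confirm that the linearity of the $L$-integral and the identification $z_H(t,\cdot) = K^H 1_{[0,t)}$ can both be applied under the hypothesis $g \in L^2_H([0,T])$, which holds automatically for step functions since $K^H 1_{[0,s_j)} = z_H(s_j,\cdot) \in L^2([0,T])$ (this being precisely the $L^2$-isometry statement $\|z_H(t,\cdot)\|_{L^2([0,t])}^2 = t^{2H}$ used earlier in the paper).
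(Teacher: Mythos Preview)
Your proof is correct and follows essentially the same route as the paper: both decompose the step function via the telescoping identity $1_{(s_{j-1},s_j]} = 1_{(0,s_j]} - 1_{(0,s_{j-1}]}$, invoke the linearity of $K^H$ together with the identification $K^H 1_{[0,t)} = z_H(t,\cdot)$, and then use linearity of the $L$-integral to recover $Y_{s_j}-Y_{s_{j-1}}$. The only cosmetic difference is that the paper first reduces to a single indicator $1_{(s_1,s_2]}$ and then appeals to linearity, whereas you carry the full sum through explicitly.
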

\begin{proof}
It is clear from the definition that the integral of a step function is linear. We will prove the claim for indicator functions. The general claim follows from the linearity. Set $g(s)=1_{(s_1,s_2]}$. Now
\begin{align*}
&\int_0^T g(s)dY_s=\int_0^T \left ( K^H g\right )(s)dL_s=\int_0^T \left (K^H\left(1_{(0,s_2]}-1_{(0,s_1]}\right)\right )(s)dL_s\\
=&Y_{s_2}-Y_{s_1}.
\end{align*}
\end{proof}
Obviously the following isometry holds for a step function $g$
\begin{equation}
\label{mgintisometry}
||\int_0^T (K^Hg)(s)dL_s||_{L^2(\Pro)}^2=\E L_1^2\int_0^T (K^Hg)^2(s)ds=\E L_1^2 \cdot ||g||^2_{{L}^{2}_{H}([0,T])}.
\end{equation}
Next we restrict ourselves to the case $H\in\left(0,\frac{1}{2}\right)$. In this case $L^2_H([0,T])$ is complete (\cite{sottinen}) and the step functions are dense in $L^2_H([0,T])$. Thus we can make the following alternative definition. Note that both the definitions yield  the same Wiener integral.
\begin{defin}
\label{altdef}
Let $Y$ be a fLpMG with driving L\'evy process $L$ and Hurst index $H\in\left(0,\frac{1}{2}\right )$. Let $g\in L^2_H([0,T])$ and let $\{g_k\}_{k\in \Z_+}$ be a sequence of step functions converging to $g$ in $L^2_H([0,T])$. We define the Wiener integral of $g$ with respect to $Y$ as follows
\begin{equation*}
\int_0^T g(s)dY_s=L^2(\Pro)-\lim_{k\rightarrow \infty} \int_0^T g_k(s)dY_s.
\end{equation*}
\end{defin}
Note that the definition does not depend on the approximating sequence.
\section{Financial application}
Next we will construct an arbitrage free model including fractional L\'evy processes. This is a (geometric) mixed Brownian motion and fractional L\'evy process model. The no-arbitrage result is analogous to the result in the case of mixed Brownian motion and fractional Brownian motion.

In the following, $Z$ may be either fLpMG or fLpMvN with $H>\frac{1}{2}$ and $W$ is an ordinary Brownian motion independent of $Z$. Let $\sigma,\epsilon>0$. Define the mixed process by
\begin{equation}
\label{Uprocess}
U_t=\sigma Z_t +\epsilon W_t.
\end{equation}
\begin{theor}
\label{noarbitrage}
Let the market model be given by $(\Omega,\mathcal{F},\exp{U},(\mathcal{F}_t^U),\Pro)$ and let $\Phi$ be a stopping-smooth trading strategy, where we use the conventions of~\cite{bendersottinenvalkeila2}. Then $\Phi$ is not an arbitrage opportunity.
\end{theor}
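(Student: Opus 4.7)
The plan is to invoke the abstract no-arbitrage theorem for stopping-smooth strategies from Bender-Sottinen-Valkeila, whose hypotheses reduce to verifying pathwise regularity and a quadratic variation condition on the log-price process $U$. Throughout I would work along the dyadic sequence of partitions $(\pi_n)$ of $[0,T]$ where $\pi_n = \{j2^{-n}T : 0 \le j \le 2^n\}$.

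First, I would record that because $H > \tfrac12$, the process $Z$ has continuous (indeed H\"older) sample paths, so $U = \sigma Z + \epsilon W$ is continuous and the price process $\exp(U)$ is continuous and strictly positive, matching the basic setup in Bender-Sottinen-Valkeila. The key computation is then the pathwise quadratic variation of $U$ along $(\pi_n)$. Writing $V_n^X$ for the sum of squared increments of a process $X$ along $\pi_n$, expansion gives
\begin{equation*}
V_n^U = \sigma^2 V_n^Z + \epsilon^2 V_n^W + 2 \sigma \epsilon \sum_j \Delta_j^n Z \, \Delta_j^n W.
\end{equation*}
By Theorem~\ref{flpmgqv} (respectively Theorem~\ref{flpmvnqv}) we have $V_n^Z \to 0$ a.s., and $V_n^W \to t$ a.s. by the classical result for Brownian motion. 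For the cross term a Cauchy-Schwarz bound
\begin{equation*}
\Bigl| \sum_j \Delta_j^n Z \, \Delta_j^n W \Bigr| \le \bigl( V_n^Z \bigr)^{1/2}\bigl( V_n^W \bigr)^{1/2}
\end{equation*}
gives convergence to $0$ a.s., without any need for $Z$ to be a semimartingale. Hence $[U]_t = \epsilon^2 t$ almost surely along the dyadic partitions.

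The second step is to translate this into the required property of the price process. By F\"ollmer's pathwise It\^o formula applied to the smooth function $\exp$, the price $S = \exp(U)$ admits the pathwise quadratic variation $[S]_t = \epsilon^2 \int_0^t \exp(2U_s)\,ds$, i.e.\ the same functional form as a geometric Brownian motion with volatility $\epsilon$. This is exactly the structural identity required to apply the no-arbitrage result of Bender-Sottinen-Valkeila to stopping-smooth strategies $\Phi$: pathwise continuity of $S$ combined with the existence of a continuous, strictly positive quadratic variation is enough for their argument to close and rule out arbitrage.

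The main obstacle I anticipate is ensuring that all these statements really hold in the strong pathwise sense demanded by the Bender-Sottinen-Valkeila framework, rather than merely in probability. The convergence $V_n^Z \to 0$ already holds a.s.\ by the cited theorems, and the Cauchy-Schwarz argument above preserves the a.s.\ mode of convergence, so this obstacle is mild. A minor additional care is the independence of $W$ and $Z$, which ensures that $(\mathcal{F}_t^U)$ is compatible with the needed filtration structure; since $W$ is independent of $Z$, the cross-variation argument works on a full-measure set regardless of the underlying filtration subtleties.
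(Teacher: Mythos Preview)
Your quadratic-variation argument is correct and actually more explicit than the paper's, which simply asserts that $U$ inherits the Brownian quadratic variation from $W$ after invoking Theorems~\ref{flpmgqv} and~\ref{flpmvnqv}; your Cauchy--Schwarz treatment of the cross term is a clean way to make that precise.

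However, there is a genuine gap. You write that the hypotheses of the Bender--Sottinen--Valkeila theorem ``reduce to verifying pathwise regularity and a quadratic variation condition,'' and you conclude that continuity of $S$ together with the quadratic-variation identity suffices. That is not what their Theorem~5 requires. The second hypothesis is not mere continuity but the \emph{conditional small ball property} (equivalently, conditional full support) of the price process with respect to $(\mathcal{F}^U_t)$. Continuity plus the correct quadratic variation alone do \emph{not} rule out arbitrage for stopping-smooth strategies: think of geometric fractional Brownian motion, which is continuous and has zero quadratic variation yet admits well-known arbitrage constructions. The conditional small ball property is precisely the extra richness of paths that blocks such strategies, and you have not addressed it at all.

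The paper closes this gap by citing Theorem~3.1 of Pakkanen, which says that an independent Brownian component forces conditional full support on $U$ regardless of the other summand; CFS then passes to $\exp(U)$, and Pakkanen's Lemma~2.3 identifies CFS with the conditional small ball property needed in Bender--Sottinen--Valkeila. You should add this step; without it the proof is incomplete.
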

\begin{proof}
We will check the assumptions of Theorem~5 of~\cite{bendersottinenvalkeila2} and then we are done. The two conditions to be checked, are the quadratic variation property and the conditional small ball property.

Both fLpMvN and fLpMG are continuous path processes with zero quadratic variation over the dyadic partitions, see Theorems~\ref{flpmgqv} and~\ref{flpmvnqv}. Thus $U$ has the quadratic variation of Brownian motion over these partitions.

Moreover, $U$ has conditional full support (CFS) w.r.t. its own filtration by Theorem~3.1 ~of~\cite{pakkanen}. Since $U$ has CFS w.r.t. $(\mathcal{F}^U_t)$ on $\R$, it follows that $\exp{U}$ has CFS w.r.t. $(\mathcal{F}^U_t)$ on $\R_+$. This is equivalent to the conditional small ball property of~\cite{bendersottinenvalkeila2} by Lemma~2.3 ~of~\cite{pakkanen}.
\end{proof}
The exact definition for stopping-smooth strategies is not given in this paper, because it is rather technical. According to~\cite{bendersottinenvalkeila2} the chosen strategies cover hedges for many European, lookback and Asian options. Thus, it is an economically meaningful class.

This mixed model is a natural way for modeling shocks in financial markets. The Brownian motion part corresponds to the ordinary noise in the market and the fractional L\'evy process part to sudden shocks in the market. On the other hand the fractional L\'evy process has the covariance structure of fBm, this allows to model for long-range dependence.

The no-arbitrage result holds for both fLp concepts, but from the modeling point of view they are different. If one wants to have stationary increments of $U$, one should use fLpMvN. If one wants to avoid history from $-\infty$, one should use fLpMG instead. In real world, there is always the time $0$ when the trading began. Hence fLpMG might be more natural choice. However, this modeling question is rather delicate.

If in the model~(\ref{Uprocess}), $Z$ is of bounded variation, then $U$ is a semimartingale with Brownian motion as the martingale part of the decomposition. However, this model has long-range dependence property.

\section{Proofs}
\label{mainproof}
First we prove a lemma about the connection of the normalizing constants of the different integral representations.
\begin{lemma} For any $H\in(0,1)$
\begin{equation*}
C_H=c_H.
\end{equation*}
\end{lemma}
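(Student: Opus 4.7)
The plan is to start from the two explicit closed-form expressions already given in the excerpt, namely
\[
C_H=\frac{\bigl(2H\sin(\pi H)\,\Gamma(2H)\bigr)^{1/2}}{\Gamma(H+\tfrac12)},\qquad
c_H=\frac{1}{\Gamma(H+\tfrac12)}\left(\frac{2H\,\Gamma(H+\tfrac12)\,\Gamma(\tfrac32-H)}{\Gamma(2-2H)}\right)^{1/2},
\]
and reduce the claim $C_H=c_H$ to a gamma-function identity. After canceling the common factor $\Gamma(H+\tfrac12)^{-1}$ and the $\sqrt{2H}$ and squaring, the equality is equivalent to
\[
\sin(\pi H)\,\Gamma(2H)\,\Gamma(2-2H)=\Gamma\!\left(H+\tfrac12\right)\Gamma\!\left(\tfrac32-H\right).
\]
So the task is purely to verify this trigonometric--gamma identity for $H\in(0,1)$.

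The tools I would use are three classical identities: the recursion $\Gamma(z+1)=z\Gamma(z)$, the reflection formula $\Gamma(z)\Gamma(1-z)=\pi/\sin(\pi z)$, and the double-angle formula $\sin(2\pi H)=2\sin(\pi H)\cos(\pi H)$. On the left-hand side I rewrite $\Gamma(2-2H)=(1-2H)\Gamma(1-2H)$ and then apply reflection with $z=2H$ to obtain $\Gamma(2H)\Gamma(1-2H)=\pi/\sin(2\pi H)$; the double-angle formula then collapses the left-hand side to $(1-2H)\pi/(2\cos(\pi H))$. On the right-hand side I rewrite $\Gamma(\tfrac32-H)=(\tfrac12-H)\Gamma(\tfrac12-H)$ and apply reflection with $z=\tfrac12+H$, using $\sin(\pi(\tfrac12+H))=\cos(\pi H)$, to get $\Gamma(H+\tfrac12)\Gamma(\tfrac12-H)=\pi/\cos(\pi H)$, so the right-hand side becomes $(\tfrac12-H)\pi/\cos(\pi H)$. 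Both sides equal $(1-2H)\pi/(2\cos(\pi H))$, proving the identity.

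There is essentially no obstacle here; the only mild care needed is to handle the cases $H=\tfrac12$ (where both sides vanish, so the identity is trivial by continuity or direct evaluation since $\cos(\pi H)=0$ is not an issue before cancellation — one should instead compute the original unsquared expressions and check $C_{1/2}=c_{1/2}=1$ directly) and to note that none of the gamma factors invoked has a pole in $H\in(0,1)$ (since $2H\in(0,2)$, $1-2H\in(-1,1)\setminus\{0\}$ away from $H=\tfrac12$, and $\tfrac12\pm H\in(-\tfrac12,\tfrac32)$ are all non-integer or safely positive after the recursion step). Everything else is routine bookkeeping.
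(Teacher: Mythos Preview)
Your proof is correct and follows essentially the same route as the paper: both arguments reduce the equality to a gamma-function identity and verify it using the recursion $\Gamma(z+1)=z\Gamma(z)$, Euler's reflection formula, and the double-angle identity for sine. The paper works directly with the difference $C_H-c_H$ and keeps the square roots throughout, whereas you first square and cancel the common factors to isolate the cleaner identity $\sin(\pi H)\,\Gamma(2H)\,\Gamma(2-2H)=\Gamma(H+\tfrac12)\,\Gamma(\tfrac32-H)$; the paper also applies the recursion to $\Gamma(2H)$ and $\Gamma(H+\tfrac12)$ rather than to $\Gamma(2-2H)$ and $\Gamma(\tfrac32-H)$, but this is only a cosmetic difference.
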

\begin{proof}
First of all by~\cite{mishura}
\begin{equation*}
C_H=\frac{(2H \sin{(\pi H)}\Gamma{(2H)})^\frac{1}{2}}{\Gamma{(H+\frac{1}{2})}}.
\end{equation*}
Now we have that
\begin{align*}
&C_H-c_H=\frac{1}{\Gamma(H+\frac{1}{2})}\left ( (2H\sin{(\pi H)}\Gamma(2H))^\frac{1}{2}-\left (\frac{2H\Gamma(H+\frac{1}{2})\Gamma(\frac{3}{2}-H)}{\Gamma(2-2H)}\right )^{\frac{1}{2}}\right )\\
=&\frac{1}{\Gamma(H+\frac{1}{2})}\sqrt{\frac{2H}{\Gamma(2-2H)}}\left ( (\sin{(\pi H)}\Gamma(2H)\Gamma(2-2H))^\frac{1}{2}-(\Gamma(H+\frac{1}{2})\Gamma(\frac{3}{2}-H))^\frac{1}{2}\right ).
\end{align*}
For the difference we have now that
\begin{align*}
&(\sin{(\pi H)}\Gamma(2H)\Gamma(2-2H))^\frac{1}{2}-(\Gamma(H+\frac{1}{2})\Gamma(\frac{3}{2}-H))^\frac{1}{2}\\
=&(\sin{(\pi H)}(2H-1)\Gamma(2H-1)\Gamma(2-2H))^\frac{1}{2}-((H-\frac{1}{2})\Gamma(H-\frac{1}{2})\Gamma(\frac{3}{2}-H))^\frac{1}{2}\\
=&\sqrt{H-\frac{1}{2}}\left ( \left (2\sin{(\pi H)}\frac{\pi}{\sin{\pi(2H-1)}}\right )^\frac{1}{2}-\left( \frac{\pi}{\sin{\pi(H-\frac{1}{2})}}\right)^\frac{1}{2}\right )\\
=&\sqrt{\pi (H-\frac{1}{2})}\left ( \left ( -\frac{2\sin{(\pi H)}}{\sin{(2\pi H)}}\right )^\frac{1}{2}-\left ( \frac{-1}{\cos{\pi H}}\right )^\frac{1}{2}\right )\\
=&\sqrt{\pi (H-\frac{1}{2})\sin{2\pi H}\cos{(\pi H)}}\left ( (-2\sin{(\pi H)\cos{(\pi H)}})^\frac{1}{2}-(-\sin(2\pi H))^\frac{1}{2}\right )=0.
\end{align*}
The previous computation is for $H>\frac{1}{2}$, but an analogous computation goes through  for $H<\frac{1}{2}$ as well.
\end{proof}
Next we present some results about finiteness of the moments of different kernels.
\begin{lemma}
\label{ikm}
Let $H>\frac{1}{2}$ and $K>2$. Then for any $t>0$
\begin{equation*}
\int_0^t (z_H(t,s))^{K}ds=\infty,
\end{equation*}
when $H\geq \frac{1}{2}+\frac{1}{K}$.
\end{lemma}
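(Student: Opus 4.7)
The plan is to extract a pointwise lower bound of the form $z_H(t,s)\geq C(t)\,s^{1/2-H}$ valid on a neighborhood of the origin, and then notice that the critical exponent for $\int_0^{\varepsilon} s^{K(1/2-H)}\,ds$ to diverge is precisely $H\geq \frac{1}{2}+\frac{1}{K}$.

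First I would invoke the simplified form of the Molchan--Golosov kernel, which is available because $H>\frac{1}{2}$:
\begin{equation*}
z_H(t,s)=\Big(H-\tfrac{1}{2}\Big)c_H\,s^{\frac{1}{2}-H}\int_s^t u^{H-\frac{1}{2}}(u-s)^{H-\frac{3}{2}}\,du.
\end{equation*}
The factor $s^{1/2-H}$ is already the singular behavior I want to isolate, so the remaining task is to show that the inner integral stays bounded away from $0$ as $s\downarrow 0$.

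Next, for $s\in(0,t/2)$ I would restrict the integration to the subinterval $u\in[t/2,t]$. On this range, $u^{H-1/2}\geq (t/2)^{H-1/2}$ because $H-\frac{1}{2}>0$, and $(u-s)^{H-3/2}\geq t^{H-3/2}$ because $H-\frac{3}{2}<0$ makes $v\mapsto v^{H-3/2}$ decreasing while $u-s\leq t$. Hence
\begin{equation*}
\int_s^t u^{H-\frac{1}{2}}(u-s)^{H-\frac{3}{2}}\,du\geq \tfrac{t}{2}\cdot (t/2)^{H-\frac{1}{2}}\cdot t^{H-\frac{3}{2}}=2^{-H-\frac{1}{2}}\,t^{2H-1},
\end{equation*}
which produces a constant $C(t)>0$ with $z_H(t,s)\geq C(t)\,s^{1/2-H}$ on $(0,t/2)$.

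Finally, raising to the $K$-th power and integrating,
\begin{equation*}
\int_0^t (z_H(t,s))^K\,ds\geq C(t)^K\int_0^{t/2} s^{K(\frac{1}{2}-H)}\,ds,
\end{equation*}
and the right-hand integral diverges exactly when $K(\frac{1}{2}-H)\leq -1$, i.e.\ $H\geq \frac{1}{2}+\frac{1}{K}$. I do not anticipate a serious obstacle: the explicit kernel formula does all the work, and the only small subtlety is tracking the two monotonicity directions ($u^{H-1/2}$ increasing versus $(u-s)^{H-3/2}$ decreasing) to confirm that both factors admit uniform positive lower bounds on the subinterval $[t/2,t]$.
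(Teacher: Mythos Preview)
Your proof is correct and follows essentially the same approach as the paper: both isolate the factor $s^{1/2-H}$ from the simplified kernel formula and then show the remaining inner integral $\int_s^t u^{H-1/2}(u-s)^{H-3/2}\,du$ is bounded away from $0$ near $s=0$, reducing the question to the divergence of $\int_0^\varepsilon s^{K(1/2-H)}\,ds$. The only cosmetic difference is in how that lower bound is obtained: the paper uses $u^{H-1/2}\geq (u-s)^{H-1/2}$ to evaluate the integral exactly as $\frac{1}{2H-1}(t-s)^{2H-1}$, whereas you restrict to $u\in[t/2,t]$ and bound each factor by a constant.
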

\begin{proof}
\begin{align*}
&(H-\frac{1}{2})^{-K}c_H^{-K}\int_0^t (z_H(t,s))^{K}ds
=\int_0^t s^{K(\frac{1}{2}-H)}\left ( \int_s^t u^{H-\frac{1}{2}}(u-s)^{H-\frac{3}{2}}du\right )^{K}ds\\
\geq&\int_0^t s^{K(\frac{1}{2}-H)}\left ( \int_s^t (u-s)^{H-\frac{1}{2}}(u-s)^{H-\frac{3}{2}}\right )^K ds\\
=&\int_0^t s^{K(\frac{1}{2}-H)}\left ( \frac{1}{2H-1}(t-s)^{2H-1}\right )^Kds.
\end{align*}
We note that the factor
\begin{equation*}
\left ( \frac{1}{2H-1}(t-s)^{2H-1}\right )^K
\end{equation*}
is bounded and also bounded away from zero in some neighborhood of the origin. Thus the last integral is finite if and only if
\begin{equation*}
\int_0^t s^{K(\frac{1}{2}-H)}ds<\infty.
\end{equation*}
Thus the integral $\int_0^t (z_H(t,s))^Kds=\infty$ if $K(\frac{1}{2}-H)\leq 1$ i.e. $H\geq \frac{1}{2}+\frac{1}{K}$.
\end{proof}
\begin{lemma}
\label{fkm}
For $H>\frac{1}{2}$ and any  $t>0$
\begin{equation*}
\int_{-\infty}^t (f_H(t,s))^Kds<\infty, \quad K\geq 2.
\end{equation*}
\end{lemma}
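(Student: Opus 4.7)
The plan is to partition $(-\infty,t]$ into three regions where $f_H$ behaves qualitatively differently and bound the integral on each:
\begin{equation*}
\int_{-\infty}^t (f_H(t,s))^K ds = \int_0^t (f_H(t,s))^K ds + \int_{-1}^0 (f_H(t,s))^K ds + \int_{-\infty}^{-1} (f_H(t,s))^K ds.
\end{equation*}

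On $[0,t]$ the term $(-s)_+^{H-1/2}$ vanishes, so $f_H(t,s) = C_H(t-s)^{H-1/2}$; since $H>1/2$ the exponent $K(H-1/2)$ is positive and the first integral is plainly finite. On $[-1,0]$ both $(t-s)^{H-1/2}$ and $(-s)^{H-1/2}$ are continuous and bounded (the second tends to $0$ as $s\to 0^-$ because $H-1/2>0$), so $f_H$ is bounded there and the second integral is trivially finite.

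The crucial part is the tail $(-\infty,-1]$, and the main (and only) obstacle is producing the correct decay rate of $f_H$ at $-\infty$. For this I would use the mean value representation
\begin{equation*}
(t-s)^{H-1/2} - (-s)^{H-1/2} = \bigl(H-\tfrac{1}{2}\bigr)\int_{-s}^{t-s} u^{H-3/2}\,du,
\end{equation*}
and exploit the fact that $u\mapsto u^{H-3/2}$ is decreasing on $(0,\infty)$ because $H<3/2$. Then on $u\in[-s,t-s]$ with $s<0$ the integrand is bounded by $(-s)^{H-3/2}$, yielding
\begin{equation*}
|f_H(t,s)| \leq C_H\bigl(H-\tfrac{1}{2}\bigr) t\, |s|^{H-3/2}, \qquad s\leq -1.
\end{equation*}

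Substituting this bound, the tail is controlled by $\int_1^\infty u^{K(H-3/2)}\,du$, which converges iff $K(\tfrac{3}{2}-H)>1$. Since $H<1$ we have $\tfrac{3}{2}-H>\tfrac{1}{2}$, so $K(\tfrac{3}{2}-H)>K/2\geq 1$ for every $K\geq 2$ (strict even at $K=2$, where the condition reads $3-2H>1\Leftrightarrow H<1$). Combining the three bounds gives the claim.
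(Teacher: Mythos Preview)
Your proof is correct and follows essentially the same approach as the paper's: both rest on the mean value estimate $(t-s)^{H-1/2}-(-s)^{H-1/2}\le (H-\tfrac12)\,t\,|s|^{H-3/2}$ for $s<0$, and then on the integrability condition $K(\tfrac32-H)>1$, which holds for all $K\ge 2$ since $H<1$. The only cosmetic differences are that the paper first normalizes to $t=1$ by self-similarity and transforms the tail to a compact interval via $z=1/s$, whereas you work directly on $(-\infty,-1]$; the substance is the same.
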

\begin{proof} From self-similarity,  is sufficient to consider only $t=1$. We have that
\begin{align*}
&C_H^{-K}\int_0^1 (f_H(1,s))^Kds=\int_{-\infty}^0 \left ( (1-s)^{H-\frac{1}{2}}-(-s)^{H-\frac{1}{2}}\right )^Kds+\frac{1}{K(H-\frac{1}{2})+1}.
\end{align*}
For the first term we get
\begin{align*}
&\int_{-\infty}^0 \left ( (1-s)^{H-\frac{1}{2}}-(-s)^{H-\frac{1}{2}}\right )^Kds=\int_0^{\infty}\Big((1+s)^{H-\frac{1}{2}}-s^{H-\frac{1}{2}}\Big)^Kds\\
=& \int_0^1 z^{K(\frac{1}{2}-H)}((1+z)^{H-\frac{1}{2}}-1)^Kz^{-2}dz=\int_0^1 z^{K(\frac{1}{2}-H)-2}((1+z)^{H-\frac{1}{2}}-1)^Kdz.
\end{align*}
By the Lagrange  theorem we have that for some $x\in[0,z]$
\begin{equation*}
(1+z)^{H-\frac{1}{2}}-1=\Big(H-\frac{1}{2}\Big)(1+x)^{H-\frac{3}{2}}z\leq \Big(H-\frac{1}{2}\Big)z.
\end{equation*}
Now we have that
\begin{align*}
&\int_0^1 z^{K(\frac{1}{2}-H)-2}((1+z)^{H-\frac{1}{2}}-1)^Kdz\leq\Big(H-\frac{1}{2}\Big)^K\int_0^1 z^{K(\frac{1}{2}-H)-2}z^Kdz\\
=&\Big(H-\frac{1}{2}\Big)^K \int_0^1 z^{K(\frac{3}{2}-H)-2}dz<\infty,
\end{align*}
since $K\left (\frac{3}{2}-H\right)-2>2\left(\frac{3}{2}-1\right)-2=-1.$

\end{proof}
\begin{rem} It follows from two above lemmas  that for any $H\geq \frac{1}{2}+\frac{1}{K}$ and $t>0$ we have inequality
$\int_{-\infty}^t (f_H(t,s))^Kds<\int_0^t (z_H(t,s))^Kds$.
\end{rem}
Now we want to establish similar inequalities for $K=3, 4$ and $\frac{1}{2}< H< \frac{1}{2}+\frac{1}{K}$.
\begin{lemma}
\label{mgbound}
1) For any $\frac{1}{2}< H< \frac{5}{6}$ and any $t>0$ we have the inequality

$\int_{-\infty}^t (f_H(t,s))^3ds<\int_0^t (z_H(t,s))^3ds$.

2) For any $\frac{1}{2}< H< \frac{3}{4}$ and any $t>0$ we have the inequality

$\int_{-\infty}^t (f_H(t,s))^4ds<\int_0^t (z_H(t,s))^4ds$.

\end{lemma}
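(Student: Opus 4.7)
My plan is to combine a pointwise comparison of the two kernels on $(0,t)$ with a sharp lower estimate of $z_H$ near $s=0$, where the Molchan-Golosov kernel blows up while the Mandelbrot-Van Ness kernel stays bounded. I will argue that this ``excess mass'' near the origin absorbs the extra tail contribution $\int_{-\infty}^0 f_H(t,s)^K\,ds$ carried by the Mandelbrot-Van Ness integral.

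By the scaling relations $z_H(t,s)=t^{H-1/2}z_H(1,s/t)$ and $f_H(t,s)=t^{H-1/2}f_H(1,s/t)$ it suffices to treat $t=1$. Using $C_H=c_H$ from the opening lemma of this section, one factors out the common constant and splits
\begin{equation*}
\int_{-\infty}^1 f_H(1,s)^K\,ds = c_H^K\left(\frac{1}{K(H-\tfrac{1}{2})+1} + \int_0^\infty \bigl((1+u)^{H-1/2}-u^{H-1/2}\bigr)^K du\right),
\end{equation*}
where the first summand is $\int_0^1 f_H(1,s)^K\,ds$ (since $f_H(1,s)=c_H(1-s)^{H-1/2}$ on $(0,1)$) and the second is $\int_{-\infty}^0 f_H(1,s)^K\,ds$ after the substitution $u=-s$.

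The monotonicity $u^{H-1/2}\geq s^{H-1/2}$ on $[s,1]$, already used in Lemma~\ref{ikm}, applied to the representation $z_H(1,s)=(H-\tfrac{1}{2})c_H s^{1/2-H}\int_s^1 u^{H-1/2}(u-s)^{H-3/2}du$, gives the elementary pointwise bound $z_H(1,s)\geq c_H(1-s)^{H-1/2}=f_H(1,s)$ for $s\in(0,1)$. Therefore the claim reduces to showing that the \emph{excess}
\begin{equation*}
E_H := \int_0^1 \bigl[z_H(1,s)^K - c_H^K(1-s)^{K(H-1/2)}\bigr]\,ds
\end{equation*}
strictly exceeds $c_H^K\int_0^\infty((1+u)^{H-1/2}-u^{H-1/2})^K du$. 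To lower-bound $E_H$, I use the blow-up of $z_H$ at the origin: either by the substitution $u=s/(1-r)$ in the integral representation, or equivalently by combining the Euler identity $F(a,b,c,z)=(1-z)^{-a}F(a,c-b,c,z/(z-1))$ with Gauss's formula $F(1/2-H,1,H+1/2,1)=\tfrac12$, one derives $z_H(1,s)\sim\tfrac{c_H}{2}s^{1/2-H}$ as $s\to 0^+$. Thus, for a suitable $\delta>0$, $E_H$ dominates a positive multiple of $\int_0^\delta s^{K(1/2-H)}ds$, which is finite precisely when $H<\tfrac12+\tfrac1K$ (the upper bound in the hypothesis) and diverges as $H\nearrow\tfrac12+\tfrac1K$.

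The main obstacle is making the comparison uniform in $H$ across the admissible interval. Near the upper endpoint the inequality is comfortable, because $E_H$ blows up while the $f_H$ tail integral stays bounded. The delicate regime is $H\searrow\tfrac12$, where both sides degenerate to the common Brownian motion value and the excess shrinks to zero. I would handle it by sharpening the pointwise estimate through the mean-value inequality $u^{H-1/2}-s^{H-1/2}\geq(H-\tfrac12)u^{H-3/2}(u-s)$, extracting the precise leading-order behavior of $E_H$ in $H-\tfrac12$, and comparing it with the analogous expansion of $c_H^K\int_0^\infty((1+u)^{H-1/2}-u^{H-1/2})^K du$; a direct calculation at the two specific exponents $K=3,4$ should close the gap in the respective ranges $(\tfrac12,\tfrac56)$ and $(\tfrac12,\tfrac34)$.
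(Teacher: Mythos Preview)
Your reduction to $t=1$, the splitting of the Mandelbrot--Van Ness integral, and the use of $C_H=c_H$ are all correct and match the paper. The paper, like you, bounds the tail $\int_0^\infty\bigl((1+u)^{H-1/2}-u^{H-1/2}\bigr)^K du$ above via the Lagrange mean-value inequality, obtaining $\leq p^K/(K-1-Kp)$ with $p=H-\tfrac12$.

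The genuine gap in your proposal is the treatment of the Molchan--Golosov side. You use two separate devices --- the crude pointwise bound $z_H(1,s)\geq c_H(1-s)^{H-1/2}$ on all of $(0,1)$, and the asymptotic $z_H(1,s)\sim\tfrac{c_H}{2}s^{1/2-H}$ on a small neighborhood of the origin --- and then argue by an expansion in $p$ near $p=0$ together with the divergence as $p\nearrow 1/K$. This covers the two endpoints of the admissible interval but not the interior: a Taylor expansion in $p$ at $p=0$ gives no information at, say, $p=0.2$, and ``a direct calculation \ldots\ should close the gap'' is not a proof. You would still need a quantitative bound valid for \emph{all} $p\in(0,1/K)$ simultaneously.

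The paper avoids this by producing a single uniform pointwise lower bound. Integrating $\int_s^1 u^{H-1/2}(u-s)^{H-3/2}\,du$ by parts and then using $(u-s)^{H-1/2}\leq u^{H-1/2}(1-s)^{H-1/2}$ on $[s,1]$ gives
\[
z_H(1,s)\;\geq\;\frac{c_H}{2}\,(1-s)^{H-1/2}\bigl(1+s^{2H-1}\bigr),\qquad s\in(0,1),
\]
which encodes both the $(1-s)^{H-1/2}$ behavior and the $s^{1/2-H}$ blow-up in one formula. Raising to the $K$th power, expanding $(1+s^{2p})^K-2^K s^{Kp}$, and integrating termwise yields an explicit rational lower bound for the excess (for $K=3$ this is $9p^3(38p^2+21p+3)\big/\prod_{k=1}^6(kp+1)$), which one then compares directly to $p^K/(K-1-Kp)$ by a monotonicity and endpoint check. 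That explicit algebraic comparison is the real content of the lemma, and it is precisely the step your outline leaves undone.
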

\begin{proof} The proof is similar in both the cases so consider the first one. It is better to normalize the integrals, and for the normalized Molchan-Golosov  kernel we obtain that
\begin{align*}
&\frac{1}{C_H^3}\int_0^1(z_H(1,s))^3ds\\
=&\Big(H-\frac{1}{2}\Big)^3\int_0^1 s^{\frac32-3H}\left (\int_s^1 u^{H-\frac{1}{2}}(u-s)^{H-\frac{3}{2}}du\right)^3ds.\\
\end{align*}
Note that integration by parts leads to the equality
\begin{equation*}
\Big(H-\frac{1}{2}\Big)\int_s^1 u^{H-\frac{1}{2}}(u-s)^{H-\frac{3}{2}}du=(1-s)^{H-\frac{1}{2}}-\Big(H-\frac{1}{2}\Big)\int_s^1 u^{H-\frac{3}{2}}(u-s)^{H-\frac{1}{2}}du.
\end{equation*}
Further, for $s\leq u\leq 1$ we have that
\begin{equation*}
(u-s)^{H-\frac{1}{2}}=u^{H-\frac{1}{2}}\Big(1-\frac{s}{u}\Big)^{H-\frac{1}{2}}\leq u^{H-\frac{1}{2}}(1-s)^{H-\frac{1}{2}},
\end{equation*}
whence
\begin{align*}
&(1-s)^{H-\frac{1}{2}}-\Big(H-\frac{1}{2}\Big)\int_s^1 u^{H-\frac{3}{2}}(u-s)^{H-\frac{1}{2}}du\\\geq&(1-s)^{H-\frac{1}{2}}
\Big(1-\Big(H-\frac{1}{2}\Big)\int_s^1u^{2H-2}du\Big)=\frac{1}{2}(1-s)^{H-\frac{1}{2}}(1+s^{2H-1}).
\end{align*}
Denote $p=H-\frac12, \, \frac12< H < \frac56$ and $\alpha_p^1=\frac{1}{C_H^3}\int_0^1(z_H(1,s))^3ds.$
Then $p\in(0,\frac13)$ and we obtain from previous estimates that
\begin{align*}
&\alpha_p^1\geq\frac{1}{8}\int_0^1s^{-3p}(1-s)^{3p}(1+s^{2p})^3ds\\
=&\int_0^1s^{-3p}(1-s)^{3p}s^{3p}ds+\frac{1}{8}\int_0^1s^{-3p}(1-s)^{3p}((1+s^{2p})^3-8s^{3p})ds\\
&=\int_0^1(1-s)^{3p}ds+\frac{1}{2}\int_0^1s^{-3p}(1-s)^{3p}(1-s^p)^{2}\Big(\frac{1+2s^{2p}+s^{4p}}{4}+\frac{s^p+s^{3p}}{2}+s^{2p}\Big)ds\\
&=:\alpha_p^{11}+\alpha_p^{12}.
\end{align*}

Evidently, $\alpha_p^{11}=\frac{1}{3p+1}$. For $\alpha_p^{12}$
 we use the following simple bounds: on the interval $s\in[0,1]$ $1\geq s^{kp}, k=1,2,3,4$
 and for $p\in(0,\frac13),\,s\in[0,1]$ $(1-s)^{3p}\geq 1-s^{3p}.$

Therefore, \begin{align*}
&\alpha_p^{12}\geq\frac32\int_0^1s^p(1-s^{3p})(1-s^p)^{2}ds\\
&=\frac32\int_0^1(s^p-2s^{2p}+s^{3p}-s^{4p}+2s^{5p}-s^{6p})ds\\
&=\frac32\Big(\frac{1}{p+1}-\frac{2}{2p+1}+\frac{1}{3p+1}-\frac{1}{4p+1}+\frac{2}{5p+1}-\frac{1}{6p+1}\Big)\\
&=9p^3\frac{38p^2+21p+3}{\prod_{k=1}^6(kp+1)}.
\end{align*}

On the other hand, for the  normalized  Mandelbrot-Van Ness kernel we can use the same reasonings as in the proof of Lemma \ref{fkm} and obtain in the above notations that
 
\begin{align*}
&\alpha_p^2:=C_H^{-3}\int_0^1 (f_H(1,s))^3ds\\
&=\int_{-\infty}^0 \left ( (1-s)^{p}-(-s)^{p}\right )^3ds+\frac{1}{3p+1}\\
&=\int_0^1 s^{-3p-2}((1+s)^{p}-1)^3ds+\frac{1}{3p+1}\\
&\leq p^3\int_0^1 s^{-3p+1}ds+\frac{1}{3p+1}=\frac{p^3}{2-3p}+\frac{1}{3p+1}.
\end{align*}

To establish the inequality $\alpha_p^{1}>\alpha_p^{2}$, that is equivalent to the statement of the lemma, it is sufficient to prove that
$$9p^3\frac{38p^2+21p+3}{\prod_{k=1}^6(kp+1)}>\frac{p^3}{2-3p}, \,p\in \Big(0, \frac13\Big),$$
or $$9\frac{38p^2+21p+3}{\prod_{k=1}^6(kp+1)}>\frac{1}{2-3p}, \,p\in \Big(0, \frac13\Big).$$

For technical simplicity, diminish the left-hand side and compare the functions $f_1(p)=9\frac{36p^2+21p+3}{\prod_{k=1}^6(kp+1)}=27\frac{12p^2+7p+1}{\prod_{k=1}^6(kp+1)}$ and $f_2(p)=\frac{1}{2-3p}$. Of course, to finish the proof, it is sufficient to establish the inequality$f_1(p)\geq f_2(p),\,p\in(0,\frac13)$.

Note that $f_2(p)$ increases from $\frac12$ to $1$. The derivative of $f_1(p)$ can be estimated, up to positive  multiplier, as 
\begin{align*}
&(24p+7)\prod_{k=1}^6(kp+1)-(12p^2+7p+1)\sum_{r=1}^6 r\prod_{k=1, k\neq r}^6(kp+1)\\
&\leq \prod_{k=1}^5(kp+1)((24p+7)(6p+1)-21(12p^2+7p+1))\\
&=-\prod_{k=1}^5(kp+1)(108p^2+81p+14)<0.
\end{align*}
It means that $f_1(p)$ decreases on $(0,\frac13),$ and it decreases from $27$ to $\frac{243}{160}>1$ that completes the proof. 
\end{proof}
\begin{rem} The integral with respect to Molchan--Golosov kernel can be bounded from below in terms  of Beta- or Gamma-functions. These estimates are more sharp that obtained in the proof of Lemma \ref{mgbound}; however, we can not proceed with them otherwise than numerically.
Indeed, for example, in the case  $K=4$ we can estimate \begin{align*}
&\frac{1}{C_H^4}\int_0^1(z_H(1,s))^4ds\geq\frac{1}{16}\int_0^1s^{-4p}(1-s)^{4p}(1+s^{2p})^4ds\\
&= \frac{1}{16}B(3-4H,4H-1)
+\frac{1}{4}B(2-2H,4H-1)+\frac{1}{4}B(2H,4H-1)\\&+\frac{1}{16}B(4H-1,4H-1)+\frac{3}{8}\frac{1}{4H-1}=:g_1(p).
\end{align*}
As before, $\frac{1}{c_H^4}\int_0^1(f_H(1,s))^4ds\leq\frac{p^4}{5-4p}+\frac{1}{4p+1}=g_2(p).$
The difference $g_1(p)-g_2(p), p=H-\frac12,$ is presented on Figure \ref{dif} in terms of Hurst parameter $H$.
\begin{figure}
\begin{center}
\includegraphics[width=6cm]{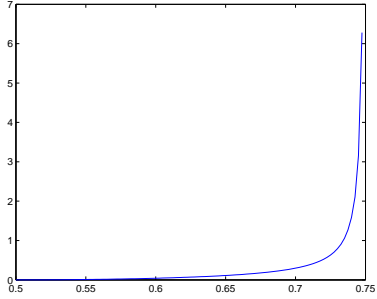}
\caption{The difference of for the integrals of Molchan-Golosov and Mandelbrot-van-Ness kernels for $K=4$ as the function of $H$.}
\label{dif}
\end{center}
\end{figure}
\end{rem}
Now we are ready for the proof of the main result, i.e.,  Theorem \ref{mainthm}.
\begin{proof}
With assumptions $\E L_1=0$ and $\E L_1^2<\infty$ we can write the characteristic exponent of the driving L\'evy process as follows
\begin{equation*}
\Psi(t)=-\frac{1}{2}\sigma^2 u^2+\int_{\R} \left ( e^{iux}-1-iux \right )\nu(dx).
\end{equation*}
Prove only the second statement, the first one can be  proved in a similar way. We use the representation formula for the characteristic function of fractional L\'evy process and get that the fourth cumulant of the fLpMG $Y$ is given by
\begin{equation*}
\int_0^t (z_H(t,s))^4ds \int_\R x^4 \nu(dx).
\end{equation*}
Analogously the fourth cumulant of the fLpMvN $X$ is
\begin{equation*}
\int_{-\infty}^t (f_H(t,s))^4ds \int_\R x^4\nu(dx).
\end{equation*}
Note that iff the L\'evy measure $\nu$ is nondegenerate, $\int_\R x^4\nu(dx)>0$.

Our aim is now to prove that with the assumption $\E L_1^4<\infty$ the fourth cumulants of different fLp's are different. This will prove that the different fractional L\'evy processes have different distributions.

For $H\geq \frac{3}{4}$ the 4th cumulant of fLpMG $Y$ is infinite by lemma~\ref{ikm}. On the other hand the corresponding cumulant for fLpMvN $X$ is finite by lemma~\ref{fkm}.

The case $H\in(\frac{1}{2},\frac{3}{4})$ uses Lemma \ref{mgbound}, and we immediately obtain the proof. 
\end{proof}
Next we proof Proposition~\ref{mainimproved}.
\begin{proof}
Let $L$ be a compound Poisson process with parameter $\lambda$ s.t. $\E L_1=0$ and $\E L^2_1<\infty$. We have for fLpMG $Y$ by proposition~\ref{pathwise} $\Pro (Y_t=0)\geq e^{-\lambda t}$ for $t>0$. On the other hand, for $t>0$ we can decompose fLpMvN $X$ to two independent components
\begin{equation*}
\int_{-\infty}^{-1}f_H(t,s)dL_s+\int_{-1}^tf_H(t,s)dL_s.
\end{equation*}
Probability that $L$ jumps exactly once on $[-1,t)$ is $\lambda(1+t)e^{-\lambda (1+t)}$ and probability that $L$ does not have jumps on $[-1,t)$ is $e^{-\lambda(1+t)}$. Now it is easy to see that
\begin{equation*}
\Pro (X_t=0)\leq \lambda(1+t)e^{-\lambda (1+t)} \wedge e^{-\lambda(1+t)} < e^{-\lambda t}\leq\Pro(Y_t=0)
\end{equation*}
for $t>0$ small enough.
\end{proof}
\section*{Acknowledgements}
We would like to thank Esko Valkeila for his fruitful comments. H. Tikanm\"aki has been supported financially by \textit{The Finnish Graduate School in Stochastics and Statistics} and \textit{Academy of Finland}, grant 212875. Yu. Mishura was supported financially by Finnish Academy of Science and Letters, Vilho, Yrj\"o and Kalle V\"ais\"al\"a Foundation.
\bibliographystyle{plain}

\begin{thebibliography}{10}

\bibitem{benassi}
Benassi,~A., Cohen,~S., and Istas,~J. 2004.
\newblock On roughness indices for fractional fields.
\newblock {\em Bernoulli} 10(2):357--373.

\bibitem{marquardt}
Marquardt, T. 2006.
\newblock Fractional {L}\'evy processes with an application to long memory
  moving average processes.
\newblock {\em Bernoulli} 12(6):1099--1126.

\bibitem{bender}
Bender, C., and Marquardt, T. 2009.
\newblock Integrating volatility clustering into exponential {L}\'evy models.
\newblock {\em J. Appl. Probab.} 46(3):609--628.

\bibitem{samorodnitsky}
Samorodnitsky, G., and Taqqu, M.S. 1994.
\newblock {\em Stable non-{G}aussian random processes},
\newblock Chapman \& Hall, Boca Raton.

\bibitem{jost}
Jost, C. 2007.
\newblock {\em Integral Transformations of Volterra Gaussian Processes},
\newblock PhD Thesis, University of Helsinki, Helsinki.

\bibitem{nualart}
Nualart, D. 2006.
\newblock {\em The Malliavin Calculus and Related Topics},
\newblock Springer, Berlin.

\bibitem{kyprianou}
Kyprianou, A.E. 2006.
\newblock {\em Introductory Lectures on Fluctuations of L\'{e}vy Processes with
  Applications},
\newblock Springer, Berlin.

\bibitem{sato}
Sato, K.-I. 1999.
\newblock {\em L\'{e}vy Processes and Infinitely Divisible Distributions},
\newblock Cambridge University Press, Cambridge.

\bibitem{bender2}
Bender, C., and Marquardt, T. 2008.
\newblock Stochastic calculus for convoluted {L}\'evy processes.
\newblock {\em Bernoulli} 14(2):499--518.

\bibitem{rajput}
Rajput, B.S., and Rosinski, J. 1989.
\newblock Spectral representations of infinitely divisible processes.
\newblock {\em Probab. Th. Rel. Fields} 82:451--487.

\bibitem{eberlein}
Eberlein, E., and Raible S. 1999.
\newblock Term structure models driven by general {L}\'evy processes.
\newblock {\em Math. Finance} 9(1):31--53.

\bibitem{karatzasshreve}
Karatzas I., and Shreve, S.E. 1998.
\newblock {\em Brownian Motion and Stochastic Calculus}.
\newblock Springer, New York.

\bibitem{rosinski2}
Rosinski, J. 1989.
\newblock On path properties of certain infinitely divisible processes.
\newblock {\em Stochastic Process. Appl.} 33:73--87.

\bibitem{jacodprotter}
Jacod, J., and Protter, P. 2004.
\newblock {\em Probability Essentials}.
\newblock Springer, Berlin.

\bibitem{jost2}
Jost, C. 2008.
\newblock On the connection between {M}olchan-{G}olosov and {M}andelbrot-{V}an
  {N}ess representations of fractional {B}rownian motion.
\newblock {\em J. Integral Equations Appl.} 20(1):93--119.

\bibitem{sottinen}
Sottinen, T. 2003.
\newblock {\em Fractional Brownian motion in finance and queuing}.
\newblock PhD Thesis, University of Helsinki, Helsinki.

\bibitem{bendersottinenvalkeila2}
Bender, C., Sottinen, T., and Valkeila, E. 2008.
\newblock Pricing by hedging and no-arbitrage beyond semimartingales.
\newblock {\em Finance Stoch.} 12(4):441--468.

\bibitem{pakkanen}
Pakkanen, M.S. 2010.
\newblock Stochastic integrals and conditional full support.
\newblock {\em J. Appl. Probab.} to appear.

\bibitem{mishura}
Mishura, Yu. 2008.
\newblock {\em Stochastic Calculus for Fractional Brownian Motion and Related
  Processes}.
\newblock Springer, Berlin.

\end{thebibliography}

\end{document}